\newtheorem{theorem}{Theorem}
\theoremstyle{plain}
\newtheorem{corollary}{Corollary}
\newtheorem{definition}{Definition}
\newtheorem{example}{Example}
\newtheorem{lemma}{Lemma}
\newtheorem{proposition}{Proposition}
\numberwithin{equation}{section}
\newcommand\Aut{\mathrm{Aut}}
\newcommand\Inn{\mathrm{Inn}}
\newcommand\Sym{\mathrm{Sym}}
\newcommand\LSec{\mathrm{LSec}}
\newcommand\RSec{\mathrm{RSec}}
\newcommand\al{\alpha}
\newcommand\bt{\beta}
\newcommand\gm{\gamma}
\newcommand\la{\langle}
\newcommand\ra{\rangle}
\newcommand\st{${}^\ast$ }
\begin{document}
\title{AG-groups and other classes of right Bol quasigroups}
\author{M. Shah}
\address{DEPARTMENT OF MATHEMATICS, QUAID-E-AZAM UNIVERSITY, ISLAMABAD,
PAKISTAN.}
\email{shahmaths\_problem@hotmail.com}
\author{S. Shpectorov}
\email{sergeys@for.mat.bham.ac.uk}
\address{SCHOOL OF MATHEMATICS, UNIVERSITY OF BIRMINGHAM, UK.}
\email{s.shpectorov@bham.ac.uk}
\author{A. Ali}
\email{dr\_asif\_ali@hotmail.com}
\address{DEPARTMENT OF MATHEMATICS, QUAID-E-AZAM UNIVERSITY, ISLAMABAD,
PAKISTAN.}
\keywords{AG-group, AG-groupoid, LA-group, counting}

\begin{abstract}
By a result of Sharma, right Bol quasigroups are obtainable from right Bol
loops via an involutive automorphism. We prove that the class of AG-groups,
introduced by Kamran, is obtained via the same construction from abelian
groups. We further introduce a new class of Bol\st quasigroups, which
turns out to correspond, as above, to the class of groups.

Sharma's correspondence allows an efficient implementation and we present
some enumeration results for the above three classes.
\end{abstract}

\subjclass{20N99}
\maketitle

\section{Introduction}

By definition, an AG-group (also called
an LA-group)\ $G$ is a set with a binary operation satisfying the left
invertive law : $(xy)z=(zy)x$ for all $x,y,z\in G$ and also having left
identity and left inverses. From these axioms it follows that the left
inverse also is the right inverse and thus it becomes a two sided inverse.
In particular, AG-groups belong to the class of quasigroups \cite{SA3}.
AG-groups were introduced in the PhD thesis of Kamran \cite{MPT} and they
first appeared in print in \cite{MK}. Some of the basic properties of
AG-groups were discussed in \cite{SA1}. For example, associativity,
commutativity, and the existence of identity are equivalent properties for
AG-groups. In particular, among the groups only the abelian groups are
AG-groups. For a geometric interpretation of AG-groups see \cite{SA6}.
AG-groupoids (also called LA-semigroups), which generalize AG-groups, have
applications in flock theory, see \cite{MN}. For additional sources on
AG-groupoids, we suggest \cite{MY2}, and also \cite{SSA}. It was noticed in
\cite{SA3} that AG-groups belong to the class of right Bol quasigroups. It
is well known that right Bol quasigroups and right Bol loops have
applications in differential geometry \cite{SA}. In \cite{SA1} enumeration
of AG-groups was proposed as an interesting problem. In \cite{SA2} the
enumeration was carried out computationally up to order $12$. In this paper
we completely classify AG-groups by showing that every AG-group arises from
an abelian group via an involutive automorphism.

\begin{theorem} \label{main}
Suppose $G$ is an abelian group and $\al\in\Aut(G)$ satisfying $\al^2=1$.
Define a new binary operation on $G$ by $a\cdot b:=\al(a)+b$. Then $G_\al=(G,\cdot)$
is an AG-group. Furthermore, every AG-group is obtainable in this way.
Finally, the AG-groups $G_\al$ and $H_\beta$ are isomorphic if and only if
the abelian groups $G$ and $H$ are isomorphic and automorphisms $\al$ and
$\beta$ are conjugate.
\end{theorem}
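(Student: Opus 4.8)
The plan is to show that the entire abelian group structure, together with the automorphism, is recoverable from the AG-group operation alone, so that an isomorphism of AG-groups is automatically an isomorphism of the underlying data. Write $e$ for the left identity of $G_\al$. Since $e\cdot a=\al(e)+a=a$ for all $a$ forces $\al(e)=0$, and $\al$ is injective, we get $e=0$; thus the left identity is the group identity and is unique. The two key identities I would then record are
\[
\al(a)=a\cdot e,\qquad a+b=(a\cdot e)\cdot b,
\]
verified by direct substitution into $a\cdot b=\al(a)+b$ using $\al^2=1$ (for the second, $(a\cdot e)\cdot b=\al(a)\cdot b=\al(\al(a))+b=a+b$). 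In other words, both $+$ and $\al$ are term-definable from $\cdot$ and the constant $e$.

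For the backward direction, suppose $\phi\colon G\to H$ is a group isomorphism with $\bt=\phi\al\phi^{-1}$, i.e.\ $\bt\phi=\phi\al$. I would check directly that $\phi$ is then an isomorphism $G_\al\to H_\bt$:
\[
\phi(a\cdot b)=\phi(\al(a)+b)=\phi(\al(a))+\phi(b)=\bt(\phi(a))+\phi(b)=\phi(a)\cdot\phi(b),
\]
using that $\phi$ is additive and intertwines the two automorphisms.

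For the forward direction, suppose $\psi\colon G_\al\to H_\bt$ is an isomorphism of AG-groups. Since the left identity is characterized by the condition ``$e\cdot a=a$ for all $a$'' and is unique, $\psi$ must send $e_G$ to $e_H$. Applying $\psi$ to the recovery identities then yields at once
\[
\psi(a+b)=\psi\big((a\cdot e_G)\cdot b\big)=(\psi(a)\cdot e_H)\cdot\psi(b)=\psi(a)+\psi(b),
\]
so $\psi$ is a group isomorphism $G\to H$, and likewise $\psi(\al(a))=\psi(a\cdot e_G)=\psi(a)\cdot e_H=\bt(\psi(a))$, giving $\psi\al=\bt\psi$. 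Hence $G\cong H$ and $\al,\bt$ are conjugate, realized by the very same map $\psi$.

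The main obstacle—indeed the only subtle point—is establishing that the left identity is canonical: unique, and preserved by every AG-group isomorphism. I expect this to follow cleanly, since $e=0$ in the explicit model $G_\al$ (so uniqueness is immediate there, and by the surjectivity part of Theorem~\ref{main} it holds for every AG-group), while being a left identity is a property expressible in terms of $\cdot$ alone and is therefore transported by any isomorphism. Once this is secured, the two recovery identities do all the work and the equivalence drops out, with a single map simultaneously witnessing the group isomorphism and the conjugacy.
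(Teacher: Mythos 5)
Your treatment of the third claim (the isomorphism criterion) is correct, and it is essentially the paper's own argument: Proposition \ref{homomorphism} likewise shows that maps respecting $\cdot$ are exactly the additive maps intertwining $\al$ and $\bt$, using that an AG-group homomorphism preserves the left identity and that $+$ and $\al$ are recoverable from $\cdot$ and $e$ via $\al(a)=a\cdot e$ and $a+b=(a\cdot e)\cdot b$. Note that for this claim you never actually need the ``surjectivity part'' you invoke: both $G_\al$ and $H_\bt$ appearing in the statement are already explicit models, so the recovery identities hold on both sides by direct computation, and no circularity arises.

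The genuine gap is that you never prove the second claim --- that \emph{every} AG-group is obtainable from an abelian group and an involutive automorphism --- and this is the substantive classification content of the theorem. Your recovery identities only show how to read off $+$ and $\al$ inside a quasigroup \emph{already known} to be of the form $G_\al$; they do not show that an abstract AG-group $(G,\cdot)$ (axioms: left invertive law, left identity, left inverses) is of this form. For that, one must define $x+y:=xe\cdot y$ and $\al(x):=xe$ and verify from the AG-axioms alone that $(G,+)$ is an abelian group, that $\al$ is an involutive automorphism of it, and that $xy=\al(x)+y$. The nontrivial step is associativity of $+$, which in the paper (Proposition \ref{abelian from AG}) rests on the identity $(ab\cdot c)d=a(bc\cdot d)$ of Lemma \ref{L1}, itself a nonobvious consequence of the AG-axioms; nothing in your proposal supplies this, and your plan sentence (``recoverable from the AG-group operation alone'') presupposes exactly what is to be proved. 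A smaller omission of the same kind: for the first claim you check that $0$ is a left identity of $G_\al$, but you never verify the left invertive law $(xy)z=(zy)x$ or the existence of left inverses in $G_\al$; these are easy computations (done in Proposition \ref{AG from abelian and alpha}), but they are part of the statement. As written, the proposal establishes only the isomorphism criterion.
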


This description of the class of AG-groups allows us to classify various
subclasses of them. For example, it easily follows from Theorem \ref{main}
that the AG-group $G_\al$ is a group if and only if $\al$ is the
identity automorphism of the abelian group $G$. In the similar spirit
let an AG-group be called {\em involutory} if its every element is an involution,
that is, it satisfies $a^2=e$, where $e$ is the (left) identity element. The
following is a corollary of Theorem \ref{main}.

\begin{theorem} \label{involutory}
An AG-group $G_\al$ is involutory if and only if $\al$ is the minus
identity automorphism that is $\al(g)=-g$ for all $g\in G$. In particular,
there is a natural bijection between abelian groups and involutory
AG-groups.
\end{theorem}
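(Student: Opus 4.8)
The plan is to deduce this directly from the explicit description in Theorem \ref{main}, so that every involutory AG-group is already presented as $G_\al$ for an abelian group $G$ and some $\al\in\Aut(G)$ with $\al^2=1$. The first step is to locate the left identity $e$ of $G_\al$. Since $e\cdot a=\al(e)+a$ must equal $a$ for all $a\in G$, we need $\al(e)=0$, and because $\al$ is an automorphism this forces $e=0$, the identity of the abelian group $G$. With this in hand, the square of an arbitrary element is computed as $a^2=a\cdot a=\al(a)+a$.

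The involutory condition $a^2=e$ then reads $\al(a)+a=0$, that is $\al(a)=-a$, for every $a\in G$; so $G_\al$ is involutory exactly when $\al$ is the minus identity map. For the converse direction I would verify that the minus identity is a legitimate choice of $\al$: the map $g\mapsto -g$ is a homomorphism precisely because $G$ is abelian (so that $-(a+b)=(-a)+(-b)$), it is clearly a bijection, and its square is the identity, so it satisfies the hypotheses of Theorem \ref{main}. With this $\al$ one gets $a^2=-a+a=0=e$, confirming that every element is an involution.

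For the bijection statement, the first part shows that each abelian group $G$ carries exactly one involutory AG-group, namely $G_{-1}$ where $-1$ denotes the minus identity automorphism, and conversely every involutory AG-group arises this way. To see that the correspondence $G\mapsto G_{-1}$ descends to a bijection between isomorphism classes, I would invoke the isomorphism criterion of Theorem \ref{main}: $G_{-1}$ and $H_{-1}$ are isomorphic if and only if $G\cong H$ and the two minus identity automorphisms are conjugate. The main point needing care is that this conjugacy clause collapses. This holds because the minus identity is canonical: for any isomorphism $\phi\colon G\to H$ one has $\phi(-g)=-\phi(g)$, so conjugating the minus identity of $G$ by $\phi$ yields the minus identity of $H$. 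Hence the conjugacy condition is automatic whenever $G\cong H$, and the correspondence is a bijection on isomorphism classes. I do not anticipate a genuine obstacle; the only subtlety is this verification that the conjugacy requirement is vacuous for the minus identity, which follows from its centrality in the automorphism group.
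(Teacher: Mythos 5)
Your proof is correct and follows essentially the same route as the paper: the core of both arguments is the computation $a\cdot a=\al(a)+a$, which vanishes for all $a$ exactly when $\al$ is the minus identity. You additionally spell out the bijection on isomorphism classes (that conjugacy of the minus-identity automorphisms is automatic because $-1$ is central in $\Aut(G)$), a point the paper asserts without proof but which is handled exactly as you describe via the isomorphism criterion of Theorem \ref{main}.
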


The groups of order one and two are the only cyclic groups for which
the identity automorphism is the same as minus identity. In particular,
for all orders $n>2$ there exists a non-associative AG-group.

There have been a lot of publications (see for example,  \cite{DKM}) about
the multiplication groups of loops and quasigroups. By definition, the
multiplication group $M(Q)$ of a quasigroup $Q$ is the subgroup of $\Sym(Q)$
generated by all left and right translations. The multiplication group of an
AG-group was studied in \cite{SA4} where it was established that for a
nonassociative AG-group of order $n$ its multiplication group is nonabelian
of order $2n$ and, correspondingly, the so called inner mapping group has order
two. Based on Theorem \ref{main}, we can give a more precise description
of the multiplication group.

\begin{theorem} \label{multiplication group}
Suppose $G_\al$ is a non-associative AG-group, that is, $\al$ is
non-identity. Then $M(G_\alpha)$ is isomorphic to the semidirect product
$G:\la\al\ra$. Note also that the order two group $\la\al\ra$ is the inner
mapping group $I(G_\al)$, that is, the stabilizer in $M(G_\al)$ of the
identity element.
\end{theorem}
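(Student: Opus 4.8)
The plan is to realize every translation of $G_\al$ as an affine transformation of the abelian group $G$, and thereby identify $M(G_\al)$ inside the holomorph $G\rtimes\Aut(G)$. Writing the operation of $G$ additively and letting $T_c\colon x\mapsto c+x$ denote translation by $c$, I would first compute the two families of translations directly from the rule $a\cdot b=\al(a)+b$. The left translation is $L_a(b)=\al(a)+b$, so $L_a=T_{\al(a)}$; the right translation is $R_a(b)=\al(b)+a$, so $R_a=T_a\circ\al$. Since $\al$ is a bijection, as $a$ ranges over $G$ the maps $L_a$ range over \emph{all} translations $T_c$; hence the left translations alone generate the full translation group $T=\{T_c:c\in G\}$, which is the regular representation of $G$, so that $T\cong G$.

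Next I would bring in the automorphism. Taking $a=0$ in the formula for the right translation gives $R_0=T_0\circ\al=\al$, so $\al\in M(G_\al)$. A one-line conjugation check shows $\al T_c\al^{-1}=T_{\al(c)}$, so $\al$ normalizes $T$ and the action of $\la\al\ra$ on $T$ is precisely the natural action of $\al$ on $G$. Because $\al$ is non-identity and fixes $0$, while the only translation fixing $0$ is the identity, we have $\la\al\ra\cap T=1$ and $|\la\al\ra|=2$. Every generator $L_a=T_{\al(a)}$ and $R_a=T_a\al$ lies in $T\rtimes\la\al\ra$, and conversely both $T$ and $\al$ lie in $M(G_\al)$; therefore $M(G_\al)=T\rtimes\la\al\ra\cong G:\la\al\ra$.

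Finally I would identify the inner mapping group. The left identity of $G_\al$ is the neutral element $0$ of $G$, since $e\cdot b=b$ forces $\al(e)=0$, i.e. $e=0$. Writing a general element of $M(G_\al)$ uniquely as $T_c\al^{i}$ with $c\in G$ and $i\in\{0,1\}$, its value at $0$ is $T_c(\al^{i}(0))=T_c(0)=c$, which equals $0$ exactly when $c=0$. Hence the stabilizer of the identity is $\{\,\al^{i}:i\in\{0,1\}\,\}=\la\al\ra$, which is therefore the inner mapping group $I(G_\al)$ of order two.

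The computations here are all routine; the only point needing care is the semidirect-product structure — specifically verifying that $\la\al\ra\cap T=1$, which is exactly where the hypothesis $\al\neq1$ is used, and that $\al$ acts on $T$ precisely as it acts on $G$, so that the abstract semidirect product $G:\la\al\ra$ is recovered with the correct action. I expect no genuine obstacle beyond keeping the additive and multiplicative notations straight.
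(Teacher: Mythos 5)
Your proposal is correct and follows essentially the same route as the paper's proof (Proposition \ref{multiplication group facts}): you identify the left section with a copy of the abelian group $G$, observe that $\al=R_0$ lies in $M(G_\al)$ and normalizes it with trivial intersection, deduce the semidirect product decomposition, and compute the stabilizer of $0$ to get $\la\al\ra$. The only difference is cosmetic: you phrase the left and right translations as affine maps $T_{\al(a)}$ and $T_a\circ\al$ inside the holomorph, whereas the paper works directly with $L_x$ via the homomorphism $x\mapsto L_x$ and the relation $R_x=\al L_x$ --- the underlying computations are the same.
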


The construction of the AG-groups from the abelian groups, as described in
Theorem \ref{main}, can easily be implemented in a computer algebra system.
In fact, we implemented it in GAP \cite{NV} and were able to enumerate all
AG-groups up to the order 2009. We stopped at the number because we used
the small group library of GAP as our source of abelian groups. The method
can easily be extended to much greater orders, as long as the abelian groups
of that order are available.  As a sample of the computation, we provide
here (see Table \ref{table1}) the information about the number of AG-groups
up to order 20.
\begin{center}
{\tiny
\begin{table}[h] \label{table1}
\begin{tabular}{|l|l|l|l|l|l|l|l|l|l|l|l|l|l|l|l|l|l|l|l|l|}
\hline
Order & $3$ & $4$ & $5$ & $6$ & $7$ & $8$ & $9$ & $10$ & $11$ & $12$ & $13$
& $14$ & $15$ & $16$ & $17$ & $18$ & $19$ & $20$ \\ \hline
Group & $1$ & $2$ & $1$ & $1$ & $1$ & $3$ & $2$ & $1$ & $1$ & $2$ & $1$
& $1$ & $1$ & $5$ & $1$ & $2$ & $1$ & $2$ \\ \hline
Other & $1$ & $2$ & $1$ & $1$ & $1$ & $7$ & $3$ & $1$ & $1$ & $6$ & $1$
& $1$ & $3$ & $24$ & $1$ & $3$ & $1$ & $6$ \\ \hline
Total & $2$ & $4$ & $2$ & $2$ & $2$ & $10$ & $4$ & $2$ & $2$ & $8$ & $2$
& $2$ & $4$ & $29$ & $2$ & $5$ & $2$ & $8$ \\ \hline
\end{tabular}
\vskip .5cm
\caption{Number of AG-groups of order $n$, $3\le n\le 20$}
\end{table}
}
\end{center}

The correspondence between the classes of abelian groups and AG-groups is
very simple, so naturally, we were wondering whether a similar construction
had been known. And indeed, we found a paper by Sharma \cite{SHA} establishing
a correspondence between the classes of left Bol loops and left Bol
quasigroups. By duality, there is a similar correspondence between right
Bol loops and right Bol quasigroups. This dual correspondence is essentially
the same as our correspondence. Clearly, the class of abelian
groups is a subclass of the class of right Bol loops. It is not so
immediately clear, but still can be shown that the class of AG-groups
is a subclass of the class of right Bol quasigroups. Hence our
correspondence is simply a special case of Sharma's correspondence
adjusted for the case of right Bol loops. In this sense, our Theorem
\ref{main} shows that the class of AG-groups is the counterpart of
the class of abelian groups under Sharma's correspondence. We consider it an
interesting problem to determine which classes of quasigroups are the
counterparts of other subclasses of right Bol loops, such as say,
the class of groups or the class of Moufang loops. In this paper
we give an answer to the first of these questions, namely, we provide
the axioms for the class of quasigroups corresponding to the class of groups.

\begin{definition}
A {\em right Bol\st quasigroup} is a quasigroup satisfying
$$a(bc\cdot d)=(ab\cdot c)d$$
for all elements $a,b,c,d$.
\end{definition}

Note that the substitution $d=b$ turns the above equality into the
right Bol law, which shows that the class of the right {\em Bol${}^\ast$}
quasigroups is a subclass of right Bol quasigroups. In the future we
will just speak of Bol quasigroups and Bol\st quasigroups, skipping
`right'.

\begin{theorem} \label{main2}
Suppose $G$ is a group and $\al\in\Aut(G)$ satisfying $\al^2=1$.
Define a new binary operation on $G$ by $a*b:=\al(a)b$. Then $G_\al=(G,*)$
is a Bol\st quasigroup. Furthermore, every Bol\st quasigroup is
obtainable in this way. Finally, the Bol\st quasigroups $G_\al$ and
$H_\beta$ are isomorphic if and only if the groups $G$ and $H$ are
isomorphic and automorphisms $\al$ and $\beta$ are conjugate.
\end{theorem}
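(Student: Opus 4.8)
The plan is to establish the three assertions in turn, mirroring the proof of Theorem~\ref{main}. For the first, I would verify directly that $G_\al=(G,*)$ is a quasigroup satisfying the Bol\st law. Given $a,b$, the equation $a*x=b$ reads $\al(a)x=b$ with unique solution $x=\al(a)^{-1}b$, while $y*a=b$ reads $\al(y)a=b$ with unique solution $y=\al(ba^{-1})$; hence both translations are bijective and $(G,*)$ is a quasigroup. For the Bol\st identity I would expand both sides using $a*b=\al(a)b$, the relation $\al^2=1$, and the fact that $\al$ is a homomorphism; a short computation shows that $a*((b*c)*d)$ and $((a*b)*c)*d$ both collapse to $\al(a)\,b\,\al(c)\,d$, so the identity holds.

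For the converse I would use that, by the substitution $d=b$ noted after the definition, every Bol\st quasigroup is in particular a (right) Bol quasigroup, and then invoke Sharma's correspondence in its dual, right-handed form: any Bol quasigroup $Q$ is isomorphic to $L_\al$ for a right Bol loop $(L,\cdot)$ and an involutive $\al\in\Aut(L)$, with $a*b=\al(a)\cdot b$. Substituting this into the Bol\st law and using that $\al$ is an involutive automorphism, I expect both sides to reduce, after the change of variables $a\mapsto\al(a)$ and $c\mapsto\al(c)$, to the loop identity $x(yz\cdot w)=(xy\cdot z)w$ on $(L,\cdot)$. The decisive step is then a one-line loop lemma: setting the middle variable $y$ equal to the loop identity turns this into the associative law, so $(L,\cdot)$ is a group. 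Thus $Q\cong G_\al$ with $G=L$ a group; conversely a group trivially satisfies the loop identity, so the construction does land in the Bol\st class.

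I expect the main obstacle to be producing the group structure, that is, the existence of a (left) identity; this is precisely what Sharma's theorem supplies, and granting it the genuinely new content is only the reduction computation and the loop lemma above. If one prefers a self-contained argument, I would first show a Bol\st quasigroup has a unique idempotent $e$, which is then a left identity. The cleanest handle I see is obtained by rewriting the Bol\st law through translations as $R_d^{-1}L_aR_d=L_{a*b}L_b^{-1}$, whose two sides are independent of $d$ and of $b$ respectively, so the maps $\lambda_a:=R_d^{-1}L_aR_d$ are well defined and, one checks, close under composition to reproduce the sought group, with $e$ the unique element satisfying $\lambda_e=\mathrm{id}$. Granting $e$, I would set $\al:=R_e$ and $a\cdot b:=(a*e)*b$; specializing the Bol\st law gives $a*(c*d)=(a\cdot c)*d$, from which $\al^3=\al$ (hence $\al^2=1$ since $\al$ is bijective), that $\al$ is an automorphism of $(G,\cdot)$, and associativity of $\cdot$ all follow by direct manipulation, recovering $a*b=\al(a)\cdot b$.

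Finally, for the isomorphism statement I would argue by definability. The identity $e$ is the unique idempotent of $G_\al$, hence preserved by any isomorphism, and both $\al(a)=a*e$ and the group product $a\cdot b=(a*e)*b$ are term operations in $(G_\al,e)$. Consequently a bijection is a quasigroup isomorphism $G_\al\to H_\bt$ exactly when it is a group isomorphism $\varphi\colon G\to H$ intertwining the automorphisms, $\bt=\varphi\al\varphi^{-1}$: the forward direction is the computation $\varphi(a*b)=\bt(\varphi(a))\varphi(b)=\varphi(a)*\varphi(b)$, and the reverse follows because $\varphi$ automatically respects the term-defined $e$, $\al$ and $\cdot$. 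This says precisely that $G_\al\cong H_\bt$ if and only if $G\cong H$ and $\al,\bt$ are conjugate.
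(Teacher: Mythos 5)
Your proposal is correct, but the converse (the heart of the theorem) takes a genuinely different route from the paper's. The paper uses Sharma's Theorem \ref{Sharma} only to guarantee that a Bol\st quasigroup $(G,*)$ has a left identity $e$; from there it constructs everything by hand: it sets $\al(x):=x*e$ and $xy:=\al(x)*y$, derives associativity of the new product from the Bol\st law, checks $\al^2=1$ and that $\al$ is an automorphism, and recovers $x*y=\al(x)y$. You instead invoke Sharma's correspondence in full force: writing the Bol\st quasigroup as $L_\al$ for a right Bol loop $(L,\cdot)$ and an involutive $\al\in\Aut(L)$, you transfer the Bol\st law through the twist --- it becomes $\al(a)\cdot((b\cdot\al(c))\cdot d)=((\al(a)\cdot b)\cdot\al(c))\cdot d$, and the substitutions $a\mapsto\al(a)$, $c\mapsto\al(c)$ turn this into the identity $x(yz\cdot w)=(xy\cdot z)w$ on $(L,\cdot)$ --- and then setting the middle variable equal to the loop identity yields associativity, so $L$ is a group. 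This reduction is valid (I checked the computation, which hinges on Sharma's $\al$ being an automorphism of the loop with $\al^2=1$), and it answers the question exactly in the form the paper poses it: Bol\st quasigroups are the Sharma-counterpart of the subclass of groups. What the paper's approach buys is self-containedness and explicit formulas, reused verbatim in the isomorphism part via Proposition \ref{homomorphism}; what yours buys is brevity and conceptual transparency, at the cost of leaning on the full strength of Theorem \ref{Sharma} rather than just the existence of a left identity. Your isomorphism argument is essentially the paper's, organized around the correct observations that $e$ is the unique idempotent of $G_\al$ and that $\al(a)=a*e$ and $a\cdot b=(a*e)*b$ are term operations; your optional ``self-contained'' sketch via $\lambda_a:=R_d^{-1}L_aR_d$ is left unverified, but since you offer it only as an alternative, it does not affect the correctness of the main argument.
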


\section{Preliminaries}

The following property of AG-groups was established in \cite{SA3}.

\begin{lemma} \label{L1}
Every AG-group satisfies the identity $(ab\cdot c)d=a(bc\cdot d)$. In other
words, every AG-group is a Bol\st quasigroup.
\end{lemma}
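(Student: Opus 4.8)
The plan is to deduce the identity $(ab\cdot c)d=a(bc\cdot d)$ directly from the AG-group axioms---the left invertive law $(xy)z=(zy)x$, the existence of a left identity $e$, and the existence of left inverses---by first collecting a few auxiliary identities and then reducing both sides to a common ``block'' form $(pq)(rs)$, after which a single rewrite matches them.

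The first two auxiliary identities are routine. From the left invertive law alone one obtains the \emph{medial law} $(ab)(cd)=(ac)(bd)$ by applying $(xy)z=(zy)x$ three times, namely $(ab)(cd)=((cd)b)a=((bd)c)a=(ac)(bd)$. Next, invoking the left identity together with the medial law, I would establish the ``left interchange'' identity $a(bc)=b(ac)$, since $a(bc)=(ea)(bc)=(eb)(ac)=b(ac)$.

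The heart of the argument is the \emph{paramedial law} $(ab)(cd)=(db)(ca)$, and I expect this to be the main obstacle. The difficulty is that the medial and left invertive laws by themselves are too symmetric: rewriting with them alone merely cycles among the expressions $((ab)c)d$, $(dc)(ab)$, $(da)(cb)$, and back, never breaking through to the asymmetric target. The left identity must be brought in essentially, via the interchange identity of the previous step. Writing that identity as $x(yz)=y(xz)$, I would derive the paramedial law by the chain $(ab)(cd)=c((ab)d)=c((db)a)=(db)(ca)$, where the first and last equalities use the interchange identity and the middle equality uses the left invertive law in the form $(ab)d=(db)a$.

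With these in hand the conclusion is immediate. I would rewrite the left-hand side using the left invertive law, $(ab\cdot c)d=((ab)c)d=(dc)(ab)$, and the right-hand side using the interchange identity, $a(bc\cdot d)=a((bc)d)=(bc)(ad)$. Applying the paramedial law to $(dc)(ab)$, with the four arguments $d,c,a,b$ in place of $a,b,c,d$, produces exactly $(bc)(ad)$, so the two sides coincide and the lemma follows. As a consistency check one may verify each of these identities in the model $a\cdot b=\al(a)+b$ of Theorem \ref{main}, where every step collapses to commutativity in the underlying abelian group.
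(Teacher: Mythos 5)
Your proof is correct, but there is no proof in the paper to compare it with: the paper does not prove Lemma \ref{L1} at all, it simply quotes the result from \cite{SA3} (a reference listed as submitted), so your argument is a self-contained derivation where the paper relies on an external citation. Every step checks out. The medial law $(ab)(cd)=((cd)b)a=((bd)c)a=(ac)(bd)$ needs only the left invertive law; the interchange law $a(bc)=(ea)(bc)=(eb)(ac)=b(ac)$ adds the left identity; the paramedial law $(ab)(cd)=c((ab)d)=c((db)a)=(db)(ca)$ follows from these two; and the finish is exact, since $(ab\cdot c)d=(dc)(ab)$ by the invertive law, $a(bc\cdot d)=(bc)(ad)$ by interchange, and the paramedial law applied to the arguments $d,c,a,b$ turns $(dc)(ab)$ into $(bc)(ad)$. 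Two remarks are worth recording. First, your argument never invokes inverses, so it actually proves the identity $(ab\cdot c)d=a(bc\cdot d)$ for every AG-groupoid with a left identity; inverses matter only for the word ``quasigroup'' in the conclusion, which the paper in any case imports from \cite{SA3}. Second, your expectation that the left identity must enter essentially is correct and can be made concrete: on $\mathbb{Z}/8\mathbb{Z}$ the operation $x\cdot y=2x+4y$ satisfies the left invertive law (hence the medial law) but fails both the paramedial law and the Bol\st identity, so your Step 3 genuinely cannot be carried out from the invertive law alone. The trade-off versus the paper: your route costs half a page, but it makes Lemma \ref{L1} --- on which the proofs of Propositions \ref{abelian from AG} and \ref{alpha from AG} rest --- independent of a reference the reader cannot yet consult.
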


We now embark on proving Theorem \ref{main}. We start with the first claim
in that theorem.

\begin{proposition} \label{AG from abelian and alpha}
Let $G$ be an abelian group under addition and let $\al\in Aut(G)$ be such
that $\al^2=1$. Define $x\cdot y=\al(x)+y$ for all $x,y\in G$.
Then $G_\al=(G,\cdot)$ is an AG-group with left identity $e=0$.
\end{proposition}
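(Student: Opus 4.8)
The plan is to verify directly the three defining axioms of an AG-group for the operation $x\cdot y=\al(x)+y$: the existence of a left identity, the left invertive law, and the existence of left inverses. Closure is automatic since $\al(x)+y\in G$, and these three conditions are precisely what the definition of an AG-group demands, so no further verification is required.

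First I would check that $e=0$ is a left identity. Since $\al$ is an automorphism, $\al(0)=0$, and hence $0\cdot x=\al(0)+x=x$ for every $x\in G$.

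Next comes the left invertive law $(x\cdot y)\cdot z=(z\cdot y)\cdot x$. Here I would expand the left-hand side as $\al(\al(x)+y)+z$; using that $\al$ is additive this becomes $\al^2(x)+\al(y)+z$, and the hypothesis $\al^2=1$ collapses it to $x+\al(y)+z$. The same computation applied to the right-hand side yields $z+\al(y)+x$. The two expressions agree precisely because $G$ is abelian, so commutativity is exactly what converts the associativity-type rearrangement into the invertive law. This is the one step where both hypotheses, $\al^2=1$ and commutativity, are genuinely used together, and it is the conceptual heart of the argument, although the computation itself is short.

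Finally I would exhibit left inverses. Solving $x^{-1}\cdot x=\al(x^{-1})+x=0$ forces $\al(x^{-1})=-x$; applying $\al$ and using $\al^2=1$ together with additivity gives the candidate $x^{-1}=-\al(x)$. A direct check then confirms $(-\al(x))\cdot x=\al(-\al(x))+x=-\al^2(x)+x=0=e$. The only mild subtlety is recognising that $\al^{-1}=\al$, which is what permits solving for $x^{-1}$ explicitly; once this is noted, the verification is immediate. Together these three checks establish that $G_\al$ is an AG-group with left identity $0$.
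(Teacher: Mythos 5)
Your proof is correct and follows essentially the same route as the paper: a direct verification of the left invertive law, the left identity $0$, and left inverses, using additivity of $\al$ and $\al^2=1$. The only cosmetic difference is that you write the left inverse as $-\al(x)$ while the paper writes $\al(-x)$; these coincide since $\al$ is a homomorphism.
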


\begin{proof}
We start by checking the left invertive law in $G_\al$. Let $x,y,z\in G$.
Then $xy\cdot z=\al(\al(x)+y)+z=\al^{2}(x)+\al(y)+z=x+\al(y)+z$, since
$\al^2=1$. Similarly, $zy\cdot x=z+\al(y)+x$, and so $zy\cdot x=z+\al(y)+x=
x+\al(y)+z=xy\cdot z$.

It is easy to see that $0$ is the left identity in $G_\al$. Indeed,
$0x=\al(0)+x=0+x$, for all $x\in G$. Finally, we claim that $\al(-x)$ is
the left inverse of $x$. Indeed, $\al(-x)x=\al(\al(-x))+x=-x+x=0$.

This shows that $G_\al$ is an AG-group.
\end{proof}

We next need to show that every AG-group can be obtained as above. Let $G$
be an AG-group with a left identity $e$. We first show how to build
an abelian group from $G$.

\begin{proposition} \label{abelian from AG}
Consider the set $G$ together with the new operation $+$ defined as follows:
$$x+y:=xe\cdot y,$$
for all $x,y\in G$. Then $(G,+)$ is an abelian group. The zero element of this
group is $e$ and, for every $x\in G$, the inverse $-x$ is equal to $x^{-1}e$.
\end{proposition}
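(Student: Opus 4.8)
The plan is to verify the abelian-group axioms for the operation $x+y=xe\cdot y$ one at a time, reusing the identities already available for AG-groups. Commutativity is immediate: the left invertive law applied to $(xe)y$ gives $xe\cdot y=(xe)y=(ye)x=ye\cdot x$, so $x+y=y+x$. For the identity element I would check that $e$ works: since $e$ is a left identity we have $ee=e$, whence $e+x=ee\cdot x=e\cdot x=x$, and commutativity then yields $x+e=x$ as well. Thus $e$ is the zero of $(G,+)$.

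The substantive step is associativity, and I expect this to be the main obstacle. First I would expand both sides in terms of the AG-group product, obtaining $(x+y)+z=(xe\cdot y)e\cdot z$ and $x+(y+z)=xe\cdot(ye\cdot z)$. The key observation is that these are precisely the two sides of the Bol${}^\ast$ identity of Lemma \ref{L1}, namely $(ab\cdot c)d=a(bc\cdot d)$, under the substitution $a=xe$, $b=y$, $c=e$, $d=z$. Hence associativity is not a fresh computation at all but a direct instance of Lemma \ref{L1}; recognizing this matching is the crucial trick.

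It remains to produce inverses, and I would verify the stated formula $-x=x^{-1}e$ directly. Computing $x+x^{-1}e=(xe)(x^{-1}e)$ and applying the medial law $(ab)(cd)=(ac)(bd)$ (a standard consequence of the left invertive law, valid in every AG-groupoid) gives $(xx^{-1})(ee)=(xx^{-1})e$; since the left inverse in an AG-group is two-sided, $xx^{-1}=e$ and the whole expression collapses to $ee=e$. By the commutativity established above this also gives $x^{-1}e+x=e$, so $x^{-1}e$ is the additive inverse of $x$. Alternatively the same reduction can be carried out using the left invertive law twice, avoiding any appeal to the medial law. Assembling the four properties shows that $(G,+)$ is an abelian group with zero $e$ and $-x=x^{-1}e$, as claimed.
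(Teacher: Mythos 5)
Your proof is correct and follows essentially the same route as the paper: commutativity and the zero element come straight from the left invertive law, and associativity is recognized, exactly as in the paper, as an instance of Lemma \ref{L1} under the substitution $a=xe$, $b=y$, $c=e$, $d=z$. The only divergence is the inverse step: the paper stays within the stated axioms by computing $x^{-1}e+x=(x^{-1}e)e\cdot x=((ee)x^{-1})x=ex^{-1}\cdot x=x^{-1}x=e$ using only the left invertive law and the left-inverse property, whereas you invoke the medial law and the two-sidedness of inverses --- both are indeed standard facts for AG-groupoids (though neither is proved in the paper), and you correctly note that the same reduction can be done with the left invertive law alone, which is precisely the paper's computation.
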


\begin{proof}
We start by checking associativity of addition. Let $x,y,z\in G$. Then $(x+y)+z=
(xe\cdot y)e\cdot z$. Using Lemma \ref{L1} with $a=xe$, $b=y$, $c=e$,
and $d=z$, we get that $(xe\cdot y)e\cdot z=xe\cdot(ye\cdot z)=x+(y+z)$.
Therefore, $(x+y)+z=x+(y+z)$, proving associativity.

Commutativity of addition follows essentially by the definition. Indeed,
$x+y=xe\cdot y=ye\cdot x=y+x$ by the left invertive law. Similarly,
$e+x=ee\cdot x=ex=x$. Now by commutativity $e$ is the zero element of
$(G,+)$. Finally, $x^{-1}e+x=(x^{-1}e)e\cdot x=((ee)x^{-1})x=ex^{-1}\cdot x=
x^{-1}x=e$. Again, commutativity shows that $x^{-1}e$ is the inverse $-x$.
\end{proof}

We remark that in place of the identity $e$ we could use any fixed element
$c\in G$. Namely, if we define addition via: $x\oplus y:=xc\cdot y$ then we
again get an abelian group, whose zero element is $c$ and where the inverses
are computed as follows: $\ominus x:=x^{-1}c$. The proof is essentially the
same. Furthermore, the groups obtained for different elements $c$ are all
isomorphic. Namely, the isomorphism between $(G,+)$ and $(G,\oplus)$ is
given by $x\mapsto x*c$.

Our next step is to prove that the mapping $\al:G\to G$ defined by
$g\mapsto ge$ is an involutive automorphism of the abelian group $(G,+)$.

\begin{proposition} \label{alpha from AG}
For all $x,y\in G$, we have $\al(x+y)=\al(x)+\al(y)$ and, furthermore,
$\al^2=1$. Therefore, $\al$ is an involutive automorphism of $(G,+)$.
\end{proposition}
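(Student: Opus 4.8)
The plan is to establish the two claims in the order in which they are needed, proving the involutive property $\al^2=1$ first, since it will be used in the verification of additivity.

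For the involutive property, I would compute $\al^2(g)=\al(ge)=(ge)e$ directly from the definition $\al(g)=ge$. Applying the left invertive law $(xy)z=(zy)x$ with $x=g$ and $y=z=e$ rewrites $(ge)e$ as $(ee)g$; since $e$ is a left identity we have $ee=e$ and then $eg=g$, so $(ge)e=g$. Hence $\al^2=1$.

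For additivity, the strategy is to reduce both $\al(x+y)$ and $\al(x)+\al(y)$ to one common expression, namely $x(ye)$. Unwinding the definitions of $\al$ and of $+$, we get $\al(x+y)=(x+y)e=(xe\cdot y)e$. The key step is to apply Lemma \ref{L1} (the Bol\st identity $(ab\cdot c)d=a(bc\cdot d)$) with $a=x$, $b=e$, $c=y$, $d=e$, which rewrites this as $x(ey\cdot e)$; using $ey=y$ this collapses to $x(ye)$. On the other hand, $\al(x)+\al(y)=\al(x)e\cdot\al(y)=(xe)e\cdot(ye)$, and the involutive property just established reduces $(xe)e$ to $x$, again yielding $x(ye)$. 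Comparing the two gives $\al(x+y)=\al(x)+\al(y)$.

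The main obstacle, such as it is, lies in the additivity computation: one must choose the correct instantiation of Lemma \ref{L1} so that the nested translations unwind symmetrically, and then invoke $\al^2=1$ to make the two sides meet. The involutive property itself is an immediate consequence of the AG-group axioms. Once both facts are in hand, $\al$ is a homomorphism of $(G,+)$ that is its own inverse, hence bijective, and therefore an involutive automorphism, which is the desired conclusion.
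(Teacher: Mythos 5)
Your proposal is correct and follows essentially the same route as the paper: both establish $\al^2=1$ first via the left invertive law ($xe\cdot e=ee\cdot x=ex=x$), then prove additivity by reducing $\al(x+y)$ to $x(ye)$ through Lemma \ref{L1} with the same instantiation $(a,b,c,d)=(x,e,y,e)$, and reducing $\al(x)+\al(y)=(xe\cdot e)\cdot ye$ to the same expression using $\al^2=1$. The only addition is your explicit closing remark that a self-inverse homomorphism is bijective, which the paper leaves implicit.
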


\begin{proof}
We first note that by the left invertive law $\al^2(x)=xe\cdot e=ee\cdot x=
ex=x$ for all $x\in G$. Therefore, $\al^2=1$, the identity mapping on $G$.
By Lemma \ref{L1}, $\al(x+y)=\al(xe\cdot y)=(xe\cdot y)e=x(ey\cdot e)=
x(ye)$. On the other hand, $\al(x)+\al(y)=(xe)e\cdot ye$. We saw above that
$xe\cdot e=x$, hence $\al(x)+\al(y)=x(ye)$, which we have shown to be equal
to $\al(x+y)$. Therefore, $\al$ is an automorphism.
\end{proof}

The last two results show that every AG-group $G$ canonically defines an abelian
group $(G,+)$ and its involutive automorphism $\al$. It remains to see that the
AG-group $G$ can be recovered from $(G,+)$ and $\al$ as in Proposition
\ref{AG from abelian and alpha}.

\begin{proposition} \label{reverse}
Suppose $G$ is an AG-group and let $(G,+)$ and $\al$ be the corresponding
abelian group and its involutive automorphism. Then for all $x,y\in G$
we have $xy=\al(x)+y$. That is, $G=G_\al$.
\end{proposition}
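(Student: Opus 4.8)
The plan is to compute $\al(x)+y$ directly using the definitions established in the previous propositions and show it collapses to $xy$.

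The plan is to compute $\al(x)+y$ directly by unwinding the two definitions involved and to observe that it collapses to $xy$. First I would apply the definition of the addition from Proposition \ref{abelian from AG}: since $a+b=ae\cdot b$ for all $a,b\in G$, substituting $a=\al(x)$ and $b=y$ gives $\al(x)+y=\al(x)e\cdot y$.

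Next I would substitute the definition of $\al$, namely $\al(g)=ge$, so that $\al(x)e=(xe)e$. The one nontrivial input is the simplification $(xe)e=\al^2(x)=x$, which is exactly the relation $\al^2=1$ proved in Proposition \ref{alpha from AG} (where it was shown that $xe\cdot e=x$). Substituting this back yields $\al(x)+y=x\cdot y=xy$, which is the desired identity, and hence $G=G_\al$.

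The computation is short, and the only thing to watch carefully is the parenthesization of the iterated products together with keeping the two operations straight, namely the addition $+$ that is itself defined from the AG-product and the AG-product by juxtaposition; there is no real obstacle, as the single fact $\al^2=1$ does all the work. This proposition closes the loop begun in Propositions \ref{abelian from AG} and \ref{alpha from AG}: from an arbitrary AG-group $G$ those results extract an abelian group $(G,+)$ and an involutive automorphism $\al$, and the present identity $xy=\al(x)+y$ shows that the original product is recovered by the recipe of Proposition \ref{AG from abelian and alpha}, thereby establishing that every AG-group arises from an abelian group via an involutive automorphism.
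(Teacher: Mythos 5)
Your proof is correct and is essentially the paper's own argument: both unwind the definitions to get $\al(x)+y=(xe\cdot e)\cdot y$ and then collapse this to $xy$ using the previously established identity $xe\cdot e=x$ (equivalently, $\al^2=1$). Nothing further is needed.
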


\begin{proof}
This is clear: indeed, $\al(x)+y=(xe\cdot e)y=xy$. We used
the identity $xe\cdot e=x$, which we showed before.
\end{proof}

We now turn to homomorphisms between AG-groups.

\begin{proposition} \label{homomorphism}
Suppose $G$ and $H$ are abelian groups and let $\al\in Aut(G)$ with $\al^{2}=1$
and $\bt\in Aut(H)$ with $\bt^{2}=1$. Then the set of homomorphisms between
AG-groups $G_\al$ and $H_\bt$ coincides with the set of group homomorphisms
$\pi:G\to H$ satisfying $\pi\al=\bt\pi$.
\end{proposition}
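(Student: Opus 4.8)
The plan is to prove the asserted equality of two sets of maps by double inclusion. Throughout I write the AG-group operations additively, as $a\cdot b=\al(a)+b$ on $G_\al$ and $a\cdot b=\bt(a)+b$ on $H_\bt$, so that a map $\pi\colon G\to H$ is a homomorphism of AG-groups precisely when
$$\pi(\al(a)+b)=\bt(\pi(a))+\pi(b)$$
holds for all $a,b\in G$. The whole argument then amounts to showing that this single identity is equivalent to the conjunction of $\pi$ being additive and $\pi$ intertwining the two automorphisms.

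The forward inclusion is immediate. If $\pi$ is a group homomorphism of $(G,+)$ into $(H,+)$ satisfying $\pi\al=\bt\pi$, then $\pi(\al(a)+b)=\pi(\al(a))+\pi(b)=\bt(\pi(a))+\pi(b)$, which is exactly the displayed AG-homomorphism identity. Hence every such $\pi$ is an AG-group homomorphism, giving one containment.

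For the reverse inclusion I start from the AG-homomorphism identity and recover the two conditions by specialization, in a deliberate order. First I set $a=0$; using $\al(0)=0$ this reads $\pi(b)=\bt(\pi(0))+\pi(b)$, so $\bt(\pi(0))=0$, and since $\bt$ is injective we get $\pi(0)=0$. With this in hand I set $b=0$, obtaining $\pi(\al(a))=\bt(\pi(a))+\pi(0)=\bt(\pi(a))$, which is precisely the intertwining relation $\pi\al=\bt\pi$. Feeding this relation back into the displayed identity rewrites it as $\pi(\al(a)+b)=\pi(\al(a))+\pi(b)$ for all $a,b$.

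The final step, and the only mildly subtle point, is that $\al$ is an automorphism and hence surjective, so as $a$ ranges over $G$ the element $x:=\al(a)$ ranges over all of $G$; substituting then upgrades the a priori restricted identity to $\pi(x+b)=\pi(x)+\pi(b)$ for all $x,b\in G$, i.e.\ $\pi$ is a group homomorphism. This closes the reverse inclusion and proves the proposition. I do not expect a genuine obstacle: the two things to watch are establishing $\pi(0)=0$ before reading off $\pi\al=\bt\pi$, and noting that it is exactly the surjectivity of $\al$ that converts additivity-along-the-image-of-$\al$ into additivity on all of $G$.
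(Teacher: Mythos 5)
Your proof is correct, and your easy inclusion (a group homomorphism intertwining $\alpha$ and $\beta$ is an AG-homomorphism) is essentially word-for-word the paper's argument for that direction. For the hard inclusion you take a recognizably different, more self-contained route. The paper leans on the structural machinery it has already built in Propositions \ref{abelian from AG}--\ref{reverse}: since $x+y=xe\cdot y$ and $\alpha(x)=xe$ recover the abelian group and the automorphism from the AG-operation, an AG-homomorphism $\pi$ automatically transports both, with cancellativity in the quasigroup $H_\beta$ supplying $\pi(e)=e$; it proves additivity first and the relation $\pi\alpha=\beta\pi$ second. You instead argue purely equationally from the explicit formula $\pi(\alpha(a)+b)=\beta(\pi(a))+\pi(b)$: specializing $a=0$ gives $\pi(0)=0$ via injectivity of $\beta$, specializing $b=0$ then gives $\pi\alpha=\beta\pi$, and feeding that back reduces additivity to the surjectivity of $\alpha$ (your substitution $x=\alpha(a)$ plays exactly the role that the identity $x+y=xe\cdot y$, i.e.\ $\alpha^2=1$, plays in the paper). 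What your version buys is independence from the earlier propositions and an explicit accounting of which properties of $\alpha$ and $\beta$ are actually used; what the paper's version buys is that the result appears as a formal consequence of the recovery formulas already on record. Notably, both arguments avoid using commutativity of $+$, which is the feature the paper exploits when it reuses this proposition verbatim for Bol${}^\ast$ quasigroups in Theorem \ref{generalized}.
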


\begin{proof}
Suppose $\pi:G_\al\to H_\bt$ is a homomorphism of AG-groups, that is, it is a
mapping $G\to H$ such that $\pi(gh)=\pi(g)\pi(h)$. By cancellativity,
$\pi$ sends the left identity of $G_\al$ to the left identity of $H_\bt$.
Therefore, for $x,y\in G$, we have $\pi(x+y)=\pi(xe\cdot y)=\pi(xe)\pi(y)=
\pi(x)\pi(e)\cdot\pi(y)=\pi(x)e\cdot\pi(y)=\pi(x)+\pi(y)$. This shows that
$\pi$ is a homomorphism of abelian groups. Next, let $x\in G$. Then
$\pi\al(x)=\pi(xe)=\pi(x)e=\bt\pi(x)$. Since $x\in G$ is arbitrary, we conclude
that $\pi\al=\bt\pi$.

For the converse, suppose that $\pi:G\to H$ is a homomorphism of abelian
groups and that $\pi$ satisfies $\pi\al=\bt\pi$. Then, for $x,y\in G$, we have
$\pi(xy)=\pi(\al(x)+y)=\pi\al(x)+\pi(y)=\bt\pi(x)+\pi(y)=\pi(x)\pi(y)$. Hence
$\pi$ is a homomorphism of AG-groups.
\end{proof}

This allows to complete the proof of Theorem \ref{main}.

\begin{corollary} \label{isomorphism}
Two AG-groups $G_\al$ and $H_\bt$ are isomorphic if and only if there is
an isomorphism $\pi$ between $G$ and $H$, satisfying $\pi\al\pi^{-1}=\bt$.
\end{corollary}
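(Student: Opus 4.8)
The plan is to deduce this directly from Proposition \ref{homomorphism}, since an isomorphism of AG-groups is nothing more than a bijective homomorphism. First I would recall the general fact that for any algebraic structure with a single binary operation, the inverse of a bijective homomorphism is again a homomorphism: if $\pi(ab)=\pi(a)\pi(b)$ and $\pi$ is a bijection, then writing arbitrary elements of the target as $\pi(a)$ and $\pi(b)$ shows that $\pi^{-1}$ respects the operation as well. Consequently, to exhibit an isomorphism $G_\al\to H_\bt$ it suffices to exhibit a bijective homomorphism, and Proposition \ref{homomorphism} already describes all homomorphisms.

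For the forward direction, suppose $\pi:G_\al\to H_\bt$ is an isomorphism of AG-groups. By Proposition \ref{homomorphism} it is a group homomorphism $G\to H$ satisfying $\pi\al=\bt\pi$, and, being bijective, it is a group isomorphism. Multiplying $\pi\al=\bt\pi$ on the right by $\pi^{-1}$ yields $\pi\al\pi^{-1}=\bt$, which is precisely the asserted condition.

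For the converse, suppose $\pi:G\to H$ is a group isomorphism with $\pi\al\pi^{-1}=\bt$, equivalently $\pi\al=\bt\pi$. Proposition \ref{homomorphism} then shows that $\pi$ is a homomorphism of the AG-groups $G_\al$ and $H_\bt$, and since $\pi$ is bijective it is an isomorphism. If one prefers to verify directly that $\pi^{-1}$ is itself an AG-group homomorphism, one simply applies Proposition \ref{homomorphism} a second time, noting that $\pi\al=\bt\pi$ rearranges to $\pi^{-1}\bt=\al\pi^{-1}$.

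I do not expect a genuine obstacle here; the only point requiring a moment's care is the bookkeeping that converts the one-sided intertwining relation $\pi\al=\bt\pi$ of Proposition \ref{homomorphism} into the conjugacy statement $\pi\al\pi^{-1}=\bt$. This rewriting is legitimate precisely because $\pi$, being an isomorphism, is invertible. Finally, this corollary together with Proposition \ref{homomorphism} completes the last claim of Theorem \ref{main}, since the statement that $\al$ and $\bt$ are conjugate means exactly that an isomorphism $\pi$ with $\pi\al\pi^{-1}=\bt$ exists.
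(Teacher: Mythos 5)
Your proposal is correct and takes essentially the same route as the paper: the paper's proof likewise derives the corollary immediately from Proposition \ref{homomorphism}, observing that for bijective $\pi$ the intertwining relation $\pi\al=\bt\pi$ is equivalent to the conjugacy relation $\pi\al\pi^{-1}=\bt$. The extra detail you supply (that the inverse of a bijective homomorphism of binary structures is again a homomorphism) is a harmless elaboration of the same argument.
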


\begin{proof}
Immediately follows from Proposition \ref{homomorphism}. Indeed, if $\pi$ is
bijective then the condition $\pi\al=\bt\pi$ is equivalent to
$\pi\al\pi^{-1}=\bt$.
\end{proof}

We record here a further corollary of Proposition \ref{homomorphism}, which
describes the full automorphism group of the AG-group $G_\al$.

\begin{corollary} \label{automorphisms}
The automorphism group of the AG-group $G_\al$ coincides with $C_{Aut(G)}(\al)$,
the centralizer in $Aut(G)$ of the involution $\al$.
\end{corollary}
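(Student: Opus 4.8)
The plan is to read this off as the self-map specialization of Corollary \ref{isomorphism}. Setting $H=G$ and $\bt=\al$ there, an automorphism of the AG-group $G_\al$ is by definition a bijective self-homomorphism, hence by that corollary it is precisely a group automorphism $\pi$ of the underlying abelian group $(G,+)$ satisfying $\pi\al\pi^{-1}=\al$. So the first step is simply to invoke Corollary \ref{isomorphism} with these substitutions.

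The second step is to rewrite the defining condition. The equation $\pi\al\pi^{-1}=\al$ is equivalent to $\pi\al=\al\pi$, which says exactly that $\pi$ commutes with $\al$, that is, $\pi\in C_{\Aut(G)}(\al)$. Conversely, every element of the centralizer is a group automorphism of $(G,+)$ satisfying the intertwining relation, so by Proposition \ref{homomorphism} it is an automorphism of $G_\al$. This yields the equality of underlying sets $\Aut(G_\al)=C_{\Aut(G)}(\al)$.

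Finally I would observe that this set-theoretic identification is in fact an equality of groups: in both $\Aut(G_\al)$ and $C_{\Aut(G)}(\al)$ the group operation is composition of maps on the common underlying set $G$, and the bijection above is the identity on these maps, so it respects composition automatically. I expect no real obstacle, since the corollary is pure bookkeeping on top of Proposition \ref{homomorphism} and Corollary \ref{isomorphism}. The only points worth keeping straight are that $G_\al$ and $(G,+)$ share an underlying set, so that bijectivity of $\pi$ is an unambiguous condition, and that a bijective AG-homomorphism is a genuine AG-isomorphism: its inverse is again an AG-homomorphism because, under Proposition \ref{homomorphism}, the AG-homomorphism condition on $\pi$ translates into the group-automorphism-plus-intertwining condition, which is manifestly inherited by $\pi^{-1}$.
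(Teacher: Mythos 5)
Your proposal is correct and follows essentially the same route as the paper: specialize Proposition \ref{homomorphism} (equivalently Corollary \ref{isomorphism}) to $H=G$, $\bt=\al$, and observe that the intertwining condition $\pi\al=\bt\pi$ becomes the commutation condition $\pi\al=\al\pi$, i.e.\ $\pi\in C_{\Aut(G)}(\al)$. The extra remarks you add (bijectivity, inverses, and compatibility of composition) are sound bookkeeping that the paper leaves implicit.
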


\begin{proof}
If $G_\al=H_\bt$ (and so $G=H$ and $\al=\bt$) then the condition $\pi\al=\bt\pi=
\al\pi$ means simply that $\pi\in Aut(G)$ must commute with $\al$.
\end{proof}

It is interesting that the involutory twist construction can be used
repeatedly.

\begin{proposition} \label{second time}
Let $(G,\circ )$ be an AG-group with a left identity $e$. Let $\al\in\Aut(G)$
such that $\al^2=1$. Define $x\cdot y=\al(x)\circ y$ for all $x,y\in G$. Then
$(G,\cdot)$ is again an AG-group.
\end{proposition}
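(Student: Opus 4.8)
The plan is to verify directly the three defining features of an AG-group for the twisted operation $\cdot$: the left invertive law, the existence of a left identity, and the existence of left inverses. Throughout I would exploit two facts about $\al$: that it is an automorphism of $(G,\circ)$, so $\al(a\circ b)=\al(a)\circ\al(b)$ and $\al$ is a bijection; and that $\al^2=1$, so in particular $\al^{-1}=\al$. None of the earlier results are strictly needed, since the argument parallels the proof of Proposition \ref{AG from abelian and alpha} but with $\circ$ in place of $+$ and with the left invertive law of $(G,\circ)$ replacing commutativity.

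For the left invertive law, I would expand $(x\cdot y)\cdot z=\al(\al(x)\circ y)\circ z$ and use that $\al$ is a homomorphism together with $\al^2=1$ to simplify the inner expression, obtaining $(x\cdot y)\cdot z=(x\circ\al(y))\circ z$. Applying the left invertive law already available in $(G,\circ)$ swaps the outer arguments $x$ and $z$, giving $(z\circ\al(y))\circ x$, and reversing the same computation identifies this with $(z\cdot y)\cdot x$. Hence $(x\cdot y)\cdot z=(z\cdot y)\cdot x$.

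The step I expect to require the most care is the left identity, since a priori the twist could move it. Here the key observation is that, because $\al$ is a surjective homomorphism of $(G,\circ)$, the image $\al(e)$ is again a left identity of $\circ$: for any $z$, writing $z=\al(y)$ gives $\al(e)\circ z=\al(e)\circ\al(y)=\al(e\circ y)=\al(y)=z$. Consequently $e\cdot x=\al(e)\circ x=x$ for all $x$, so the original left identity $e$ serves as a left identity for $\cdot$ as well, with no appeal to uniqueness being necessary.

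Finally, for left inverses, I would take the left inverse $x^{-1}$ of $x$ in $(G,\circ)$, meaning $x^{-1}\circ x=e$, and claim that $\al(x^{-1})$ is the left inverse of $x$ with respect to $\cdot$. Indeed $\al(x^{-1})\cdot x=\al(\al(x^{-1}))\circ x=\al^2(x^{-1})\circ x=x^{-1}\circ x=e$, using $\al^2=1$ once more. With the left invertive law, a left identity, and left inverses all established, $(G,\cdot)$ is an AG-group, completing the proof.
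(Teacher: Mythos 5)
Your proof is correct, but it takes a genuinely different route from the paper's. You verify the three AG-group axioms for $(G,\cdot)$ directly: the left invertive law by expanding $(x\cdot y)\cdot z=(x\circ\al(y))\circ z$ via the homomorphism property and $\al^2=1$, then invoking the invertive law of $\circ$; the left identity by observing that $\al(e)$ is again a left identity of $\circ$ (your surjectivity argument is sound, and as you note, uniqueness is not needed); and left inverses via $\al(x^{-1})\cdot x=\al^2(x^{-1})\circ x=e$. The paper instead leans on its classification machinery: by Theorem \ref{main}, $(G,\circ)=G_\bt$ for an abelian group $(G,+)$ with involutory automorphism $\bt$; by Corollary \ref{automorphisms}, $\al$ lies in $C_{\Aut(G)}(\bt)$, so $\gm=\bt\al$ is again an involutory automorphism of $(G,+)$; and then $x\cdot y=\bt(\al(x))+y=\gm(x)+y$, so $(G,\cdot)=G_\gm$. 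Your approach is self-contained and elementary, needing nothing beyond the definitions (the paper itself remarks that the result ``initially had an independent proof,'' which is presumably an argument like yours). The paper's approach, while dependent on the earlier theory, buys extra information that your proof does not give: it identifies the resulting AG-group explicitly as $G_{\bt\al}$ inside the classification, showing precisely how repeated twists compose into a single twist of the underlying abelian group.
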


\begin{proof}
Initially this had an independent proof. However, with all the theory that we
have developed, this result follows easily. Indeed, by Theorem \ref{main}, the
AG-group $(G,\circ)$ must be equal to $G_\bt$, for an abelian group $G$ and
its involutory automorphism $\bt$.

Note that this means that $x\circ y=\bt(x)+y$, where, as usual, plus indicates
addition in the abelian group $G$. According to Corollary \ref{automorphisms},
$\al$ is an automorphism of the group $G$ commuting with $\bt$. In particular,
$\gm=\bt\al$ is again an involutory automorphism of $G$.

We now notice that $x\cdot y=\al(x)\circ y=\bt(\al(x))+y=\gm(x)+y$. This
means that $(G,\cdot)$ is simply the AG-group $G_\gm$.
\end{proof}

\section{Particular classes of AG-groups}

It is natural to ask when the AG-group $G_\al$ is associative, that is,
a group. It was shown in \cite{SA1} that for AG-groups associativity is equivalent
to commutativity and also to the property that the left identity $e$ is a
two-sided identity. We can show that, in fact, $G_\al$ is never a group,
when $\al\ne 1$.

\begin{proposition} \label{group case}
Suppose $G$ is an abelian group and $\al\in\Aut(G)$ with $\al^2=1$. Then $G_\al$
is a group if and only if $\al=1$.
\end{proposition}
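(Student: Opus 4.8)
The plan is to establish both directions of the equivalence, with the nontrivial direction being to show that associativity of $G_\al$ forces $\al=1$. The reverse direction is immediate: if $\al=1$, then $a\cdot b=\al(a)+b=a+b$, so $(G,\cdot)$ is just the original abelian group, which is of course associative.

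For the forward direction, I would compute the associator directly. Take arbitrary $x,y,z\in G$ and expand both $(x\cdot y)\cdot z$ and $x\cdot(y\cdot z)$ using the definition $a\cdot b=\al(a)+b$ together with $\al^2=1$ and the fact that $\al$ is an additive automorphism. On one side, $(x\cdot y)\cdot z=\al(\al(x)+y)+z=x+\al(y)+z$, while on the other, $x\cdot(y\cdot z)=\al(x)+\al(y)+z$. Setting these equal for all $x,y,z$ and cancelling in the abelian group $(G,+)$ reduces associativity to the single condition $x=\al(x)$ for all $x\in G$, i.e. $\al=1$. This is a short computation, essentially the same flavor as the verification of the left invertive law in Proposition \ref{AG from abelian and alpha}.

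The only point requiring a little care — and the closest thing to an obstacle — is pinning down precisely which associativity constraint to impose and confirming that cancellation is legitimate. Since associativity must hold for \emph{all} triples, it suffices to specialize conveniently (for instance, $y=z=0$) to isolate the relation $x=\al(x)$, and then cancellation in the abelian group is automatic. Alternatively, one could invoke the result cited from \cite{SA1} that associativity of an AG-group is equivalent to the left identity $e$ being two-sided, and then check when $e=0$ satisfies $x\cdot 0=x$; this gives $\al(x)=x$ directly and is perhaps the cleanest route. Either way the argument is routine once the expansions are written out, so I do not anticipate any genuine difficulty.
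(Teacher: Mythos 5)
Your proposal is correct, and in fact it contains two valid arguments. Your primary route --- expanding both sides of the associativity law to get $(x\cdot y)\cdot z = x+\al(y)+z$ versus $x\cdot(y\cdot z)=\al(x)+\al(y)+z$, then cancelling to obtain $\al(x)=x$ --- is a genuine, correct proof, and it differs from the paper's. The paper instead exploits the identity element: since $0$ is a left identity of $G_\al$ and in a group the left identity must also be a right identity, it computes $x\cdot 0=\al(x)+0=\al(x)$ and concludes $\al(x)=x$ directly. That is precisely the alternative you sketch at the end, except that no appeal to the cited result of \cite{SA1} is needed --- one only needs the elementary fact that a group's left identity is its (two-sided) identity; the equivalence with associativity from \cite{SA1} is motivation, not an ingredient. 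The two approaches buy slightly different things: your associator computation is self-contained and makes the ``defect of associativity'' visible as exactly the discrepancy $x$ versus $\al(x)$, while the paper's identity-element argument is shorter and, as the paper notes immediately after the proof, it simultaneously verifies that a two-sided identity (and likewise commutativity) already forces $\al=1$, recovering the equivalences among associativity, commutativity, and two-sidedness of the identity for AG-groups. Your cancellation step is legitimate as stated, since everything happens in the abelian group $(G,+)$.
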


\begin{proof}
If $\al=1$ then $\al(x)+y=x+y$, hence $G_\al$ is simply the group $G$. Conversely,
assume that $G_\al$ is a group. Note that $e=0$ is the left identity of $G_\al$,
since $0\cdot x=\al(0)+x=0+x=x$. However, in a group the left identity is the same
as the right identity. Therefore, for all $x\in G$, we must have $x\cdot 0=x$.
However, $x\cdot 0=\al(x)+0=\al(x)$. Hence, $\al(x)=x$ for all $x\in G$, which
means that $\al=1$.
\end{proof}

This proof already verifies that $G_\al$ is a group whenever it has a two-sided
identity. Quite similarly, if $G_\al$ is commutative then for every $x\in G$ we
have $x\cdot 0=0\cdot x=x$. On the other hand, $x\cdot 0=\al(x)+0=\al(x)$. Hence
we must have that $\al(x)=x$ for all $x\in G$, and so $G_\al=G$ is a group. This
shows that indeed commutativity is also equivalent to associativity.

The second interesting class of AG-groups is the class of involutary AG-groups.
Recall from the introduction that an AG-group $G$ is called involutory if its
every nontrivial element is an involution, i.e., $x^2=e$ for all $x\in G$
where $e$ is the left identity of $G$.

\begin{proposition} \label{involutary}
Suppose $G$ is an abelian group and $\al\in\Aut(G)$ with $\al^2=1$. Then $G_\al$
is an involutory if and only if $\al =-1$. (This means that $\al(x)=-x$ for all
$x\in G$.)
\end{proposition}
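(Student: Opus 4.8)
The plan is to translate the involutory condition directly into an equation on the automorphism $\al$ by computing squares in $G_\al$. First I would recall from Proposition~\ref{AG from abelian and alpha} that the left identity of $G_\al$ is $e=0$, the zero element of the underlying abelian group $(G,+)$. Consequently the defining property of an involutory AG-group, namely $x^2=e$ for all $x$, becomes $x\cdot x=0$ for all $x\in G$.

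Next I would unwind the twisted product. For any $x\in G$,
$$x\cdot x=\al(x)+x,$$
so $x^2=e$ holds exactly when $\al(x)=-x$. Quantifying over all $x\in G$, this says that $G_\al$ is involutory if and only if $\al$ agrees with the minus-identity map on every element, i.e. $\al=-1$. Both implications are contained in this single equivalence, so the two directions of the proposition come out together.

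The only remaining point is to confirm that the equivalence is not vacuous, i.e. that $\al=-1$ is a legitimate choice under the standing hypothesis $\al^2=1$. Here I would note that on an abelian group the map $x\mapsto -x$ is always an automorphism and is clearly involutive, since $\al^2(x)=-(-x)=x$; thus $\al=-1\in\Aut(G)$ satisfies $\al^2=1$, and for this $\al$ every element indeed squares to $0=e$.

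I expect no genuine obstacle in this argument: once the left identity is identified as $0$, the entire statement collapses to the elementary observation that $\al(x)+x=0\iff\al(x)=-x$. The care required is purely bookkeeping---keeping the additive identity $e=0$ straight and remembering that the minus-identity is a bona fide involutive automorphism of any abelian group.
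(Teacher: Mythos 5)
Your proof is correct and follows exactly the paper's argument: identify the left identity as $0$, compute $x\cdot x=\al(x)+x$, and observe that this vanishes for all $x$ precisely when $\al=-1$. The extra remark verifying that $x\mapsto -x$ is a genuine involutive automorphism is harmless but not needed for the stated equivalence.
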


\begin{proof}
Recall that $x\cdot x=\al(x)+x$, so $x\cdot x=e=0$ if and only if $\al(x)+x=0$,
that is, $\al(x)=-x$, so $G_{\alpha }$ is involutory if and only if $\al(x)=-x$
for all $x$.
\end{proof}

As a consequence, we get the following.

\begin{corollary} \label{always}
For every order $n\ge 3$ there exists a non-associaitive AG-group of order $n$.
\end{corollary}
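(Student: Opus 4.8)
The plan is to reduce the statement to Proposition \ref{group case}, which tells us that the AG-group $G_\al$ is associative precisely when $\al$ is the identity automorphism. Consequently $G_\al$ is \emph{non}-associative exactly when $\al\neq 1$, and the corollary amounts to exhibiting, for each $n\ge 3$, an abelian group of order $n$ carrying a nonidentity involutive automorphism.

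First I would choose the cyclic group $G=\mathbb{Z}_n$ together with the inversion map $\al\colon x\mapsto -x$. On any abelian group inversion is an automorphism and visibly satisfies $\al^2=1$, so the hypotheses of Theorem \ref{main} and of Proposition \ref{group case} are met; in particular $G_\al$ is a well-defined AG-group of order $|G|=n$.

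Next I would check that $\al\neq 1$, which is exactly the place where the bound $n\ge 3$ is used. If $\al$ were the identity we would have $-x=x$, i.e.\ $2x=0$, for every $x\in G$; but for the generator we have $-1=n-1$, and this equals $1$ only when $n\mid 2$, that is, for $n\in\{1,2\}$. Hence for $n\ge 3$ we have $\al\neq 1$, and Proposition \ref{group case} gives that $G_\al$ is a non-associative AG-group of order $n$. (By Proposition \ref{involutary} this $G_\al$ is moreover involutory, so one in fact obtains a non-associative involutory AG-group in every order $n\ge 3$.)

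There is essentially no obstacle here: the only point requiring a moment's care is the freedom to choose the underlying abelian group. Picking the cyclic group sidesteps the one situation in which inversion fails to be nontrivial, namely an elementary abelian $2$-group, where $-1=1$; on $\mathbb{Z}_n$ with $n\ge 3$ inversion is always a genuine involution, which is all that is needed.
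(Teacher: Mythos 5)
Your proof is correct and follows exactly the paper's own argument: take the cyclic group $C_n$ with the inversion automorphism $\al=-1$, observe that $\al\ne 1$ precisely when $n\ge 3$, and conclude non-associativity from Proposition \ref{group case}. The extra details you supply (the explicit verification that $-1\ne 1$ on a generator, and the remark via Proposition \ref{involutary} that the resulting AG-group is involutory) are fine but do not change the route.
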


\begin{proof}
Indeed, we can take $G=C_n$, the cyclic group of order $n$, and $\al=-1$. When
$n\ge 3$, we have $\al\ne 1$, which means that $G_\al$ is non-associative by
Proposition \ref{group case}.
\end{proof}

Since for every prime order $n=p>2$ there exists exactly one abelian group, the
cylic group $C_p$, and since $\Aut(C_p)\cong C_{p-1}$, which has a unique element
of order two, we have the following result.

\begin{corollary} \label{odd prime}
For every prime order $n=p\ge 3$, there is only one non-associaitive AG-group
of order $n$.
\end{corollary}

\section{Some examples}

For illustration we provide some examples. The case of the prime order has been
dealt with in the preceding section.

\begin{example}
For order $6$, we have only one abelian group, namely $C_6$. Since $\Aut(C_6)$
has only one nontrivial involution, there are exactly two AG-groups of order
6, one associative, $C_6$, and one non-associative, namely, $(C_6)_\al$, where
$\al=-1$.
\end{example}

The same is true for all orders $2p$, where $p$ is an odd prime. So this case
is similar to the case of the odd prime order.

\begin{example}
For order $12$, there are exactly two abelian groups, namely $C_{12}$ and
$C_6\times C_2$. In the first case, $\Aut(C_{12})$ is an elementary
abelian group of order four. Hence its every element can be used to construct
a new AG-group. This gives us four AG-groups (one associative, one
non-associative involutory, and two further non-associative non-involutory).
In the second case, $\Aut(C_6\times C_2)$ is isomorphic to $C_2\times Sym(3)$,
and so is nonabelian of order $12$. In addition to the identity element, this
group has three conjugacy classes of involutions. Hence, in this case, too, we
get four different AG-groups.

In total, we obtain eight AG-groups of order $12$, out of which six are
non-associative.
\end{example}

\begin{example}
Let us consider the order $2009=7^2\cdot 41$. Again, there are two abelian
groups of this order, $C_{2009}$ and $C_{287}\times C_7$. In the first case
the automorphism group is abelian, containing three involutions. Hence this
group leads to four AG-groups. The automorphism group of $C_{287}\times C_7$
is isomorphic to $C_{40}\times GL(2,7)$. This group has five conjugacy classes
on involutions in addition to the identity element, hence in this case we
obtain six different AG-groups.

In total, there are 10 different AG-groups of order 2009, out of which eight
are non-associative.
\end{example}

\section{A GAP package for computing with AG-groups}

V. Sorge  and the first author developed a GAP package AGGROUPOIDS which, in
particular, contains functions dealing with AG-groups. They are based on the theory
developed in this paper.

There are four main functions:
\begin{itemize}
\item The function {\tt NrAllSmallNonassociativeAGGroups(n)} returns the total
number of nonassociative AG-groups of order $n$ provided that the SmallGroups
library contains the list of groups of order $n$. This restriction will
be lifted in the future, since all abelian groups of a given order are easy
to construct.
\item The function {\tt AllSmallNonassociativeAGGroups(n)} returns the list
of all non-associative AG-groups of the given order. Each AG-group is
represented as a GAP quasigroup.
\item The function {\tt NrAllSmallNonassociativeAGGroupsFromAnAbelianGroup(G)}
returns the total number of non-associative AG-groups that can be obtained from
the abelian group $G$. This is equal to the number of conjugacy classes
of involutions in $\Aut(G)$.
\item  The function {\tt AllSmallNonassociativeAGGroupsFromAnAbelianGroup(G)}
returns the list of non-associative AG-groups obtainable from $G$, again
as GAP quasigroups.
\end{itemize}

The entire package (not limited to these four functions) will shortly be
available from the GAP repository.

\section{Multiplication group of an AG-group}

The concept of the multiplication group of a loop and, more generally, a quasigroup
is well known. In a quasigroup $Q$, multiplication on the left (or right)
by an element $x\in Q$ is a permutation $L_x$ (respectively, $R_x$) of $Q$ called
the {\em left} (respectively, {\em right}) {\em translation} by $x$.  The set
of all left translations is called the {\em left section}, and similarly, the
set of right translations is called the {\em right section} of $Q$. We will
write $\LSec$ and $\RSec$ for the left and right sections, respectively.
Therefore, $\LSec=\{L_x\mid x\in Q\}$ and $\RSec=\{R_x\mid x\in Q\}$.

The multiplication group $M(Q)$ is the subgroup of the symmetric group $Sym(Q)$
generated by $\LSec\cup\RSec$. If $Q$ is a loop, the stabilizer in $M(Q)$ of the
identity is called the {\em inner mapping group} and denoted $\Inn(Q)$.

Since every AG-group $G_\al$ is a quasigroup we can consider its multiplication
group $M(G_\al)$. Since $G_\al$ has a left identity $0$, we can generalize
the concept of the inner mapping group to the class of AG-groups by setting
$\Inn(G_\al)$ to be the stabilizer of $0$ in $M(G_\al)$.

\begin{proposition} \label{multiplication group facts}
Let $G$ be an abelian group and $\al\in\Aut(G)$ with $\al^2=1$. Then the following
hold:
\begin{enumerate}
\item[{\rm (1)}] $M(G_\al)=\LSec\cup\RSec$;
\item[{\rm (2)}] $\Inn(G_\al)=\la\al\ra$;
\item[{\rm (3)}] $\LSec$ is a normal subgroup of $M(G_\al)$ and it is naturally
isomorphic to $G$;
\item[{\rm (4)}] $\RSec=\al\LSec$; and
\item[{\rm (5)}] $M(G_\al)$ is isomorphic to the semidirect product of $G$ with
the cyclic group $\la\al\ra$.
\end{enumerate}
\end{proposition}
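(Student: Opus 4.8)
The plan is to compute the left and right translations explicitly in terms of the abelian group structure, and then read off all five claims from these formulas. Recall that in $G_\al$ we have $x\cdot y=\al(x)+y$. Therefore the left translation $L_x$ sends $y\mapsto \al(x)+y$, which is the composition of the automorphism $\al$ (independent of $x$) with the translation-by-$\al(x)$ map of the abelian group. The right translation $R_x$ sends $y\mapsto \al(y)+x$, i.e. it is the automorphism $\al$ followed by translation-by-$x$. The first thing I would do is introduce notation: let $T_g$ denote the translation $y\mapsto y+g$ of $(G,+)$, so that $\{T_g\mid g\in G\}$ is a subgroup of $\Sym(G)$ naturally isomorphic to $G$. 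Then I would record the two identities $L_x=T_{\al(x)}\circ\al$ and $R_x=T_x\circ\al$ (or $\al$ composed on the appropriate side, depending on the convention for composing permutations, which I would fix once and use throughout).

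From these formulas, items (3) and (4) come almost immediately. Since $\al$ is fixed, the map $x\mapsto L_x$ composed with the obvious identification shows $\LSec=\{T_{\al(x)}\circ\al\mid x\in G\}$; but I actually want $\LSec$ to be a subgroup isomorphic to $G$, so I would be careful here — the cleaner statement is that as $x$ ranges over $G$, the elements $L_x$ need not themselves form the translation subgroup, since each carries a factor of $\al$. Let me reconsider: the correct reading is that $\{T_g\mid g\in G\}$ is the translation group, and I should verify that $\LSec$ equals this translation group or is in bijection with it. Computing $L_x L_{x'}^{-1}$ will show that products of left translations land in the translation subgroup, and I would identify $\LSec$ precisely. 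For (4), the relation $R_x=\al\circ L_x$ (suitably interpreted) or $R_x = T_x\circ\al$ versus $L_x=T_{\al(x)}\circ\al$ gives $R_x=\al L_{\al(x)}\al^{-1}$-type relations; I expect $\RSec=\al\,\LSec$ to drop out by direct substitution once conventions are fixed.

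Next I would handle (1) and (2) together. The key structural point is that the set $\LSec\cup\RSec$ is already closed under multiplication and inversion, so it is a subgroup, and being a generating set it must equal all of $M(G_\al)$. To see closure I would multiply representatives of the four types $L_xL_{x'}$, $L_xR_{x'}$, $R_xL_{x'}$, $R_xR_{x'}$ and check each is again a single $L$ or $R$; the factor $\al^2=1$ is exactly what makes the extra copies of $\al$ cancel in pairs, collapsing products into the set $\LSec\cup\RSec$. This is the computational heart of the argument. For (2), the inner mapping group $\Inn(G_\al)$ is the stabilizer of $0$; I would determine which $L_x$ and $R_x$ fix $0$: $L_x(0)=\al(x)$ and $R_x(0)=\al(0)+x=x$, so the stabilizing translations force $\al(x)=0$ or $x=0$, and the only maps fixing $0$ are $L_0$ (the identity) and $R_0=\al$, giving $\Inn(G_\al)=\{1,\al\}=\la\al\ra$.

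Finally, for (5) I would assemble the pieces: by (3), $\LSec\cong G$ is a normal subgroup of $M(G_\al)$; by (2), $\la\al\ra$ is a complement of order two meeting $\LSec$ trivially (since $\al\notin\LSec$ when $\al\ne 1$, as $\al$ fixes $0$ but nontrivial translations do not); and by (1) together with (4), $M(G_\al)=\LSec\cdot\la\al\ra$. These three facts are exactly the definition of an internal semidirect product $G\rtimes\la\al\ra$, with $\al$ acting by conjugation as the automorphism $\al$ itself. I expect the main obstacle to be purely bookkeeping: getting the composition convention for permutations consistent so that the conjugation action in the semidirect product matches $\al$ rather than $\al^{-1}$ (which equals $\al$ anyway here, softening the issue), and correctly distinguishing the abstract group $G$ from the translation group realizing it inside $\Sym(G)$. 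Once the explicit formulas for $L_x$ and $R_x$ are in hand, every item reduces to a short substitution using $\al^2=1$.
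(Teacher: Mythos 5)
Your overall skeleton (write the translations explicitly, then read off the five items) matches the paper's, and most of your downstream steps would go through --- but the key formula everything rests on is wrong, and the confusion you notice mid-proof is a symptom of that miscalculation, not of any composition-convention issue. In $G_\al$ we have $L_x(y)=x\cdot y=\al(x)+y$: the automorphism $\al$ hits the \emph{multiplier} $x$, not the argument $y$, so $L_x$ is the pure translation $T_{\al(x)}$, with no factor of $\al$ at all. Your formula $L_x=T_{\al(x)}\circ\al$ is what a \emph{right} translation looks like; indeed, with your formulas $L_x=R_{\al(x)}$, which would force $\LSec=\RSec$ and collapse the statement (for instance (4) would then read $\LSec=\al\LSec$, i.e.\ $\al\in\LSec$, which is false for $\al\ne 1$ since $\al$ fixes $0$ and the identity is the only translation fixing $0$). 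Your formula $R_x=T_x\circ\al$ is correct.

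Because of this, the crux of item (3) is never actually established in your write-up: you sense that with your formula $\LSec$ fails to be a subgroup, hedge (``equals this translation group or is in bijection with it''), defer the identification (``I would identify $\LSec$ precisely''), and then state (4) and (1) as expectations (``I expect $\RSec=\al\,\LSec$ to drop out''). With the corrected formula, everything you planned works, and works essentially as in the paper: $x\mapsto L_x=T_{\al(x)}$ is an injective homomorphism $G\to\Sym(G)$ (the paper's map $\psi$; note $L_xL_y=T_{\al(x)+\al(y)}=L_{x+y}$), so $\LSec$ is a subgroup naturally isomorphic to $G$; the identity $R_x=\al\circ L_x$ gives (4); the conjugation rule $\al L_x\al=L_{\al(x)}$ gives normality; and then either your closure-of-the-union computation or the paper's shorter generation argument ($M(G_\al)=\la\LSec,\al\ra$ with $\al$ of order at most two normalizing $\LSec$, so $\LSec$ has index at most two) finishes (1), after which your stabilizer-of-$0$ computation for (2) and the internal semidirect product assembly for (5) are sound. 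So the gap is a single concrete formula error, but it sits underneath every item: as written, the proposal never pins down what $\LSec$ is.
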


\begin{proof}
First note that the mapping $\psi:x\mapsto L_x$ is a homomorphism from $G$ to
$Sym(G)$. Indeed, $L_{x+y}(z)=(x+y)\cdot z=\al(x+y)+z=\al(x)+\al(y)+z=
x\cdot(\al(y)+z)=x\cdot(y\cdot z)=L_x(L_y(z))$ for all $z\in G$. This means that
$L_{x+y}$ is indeed the product of $L_x$ and $L_y$. Since $\psi$ is a homomorphism,
its image $\LSec$ is a subgroup of $\Sym(G)$. Furthermore, if $L_x(z)=z$ for
some $z\in G$ then $\al(x)+z=z$, which implies that $x=0$. Therefore, $\psi$ is
injective and so it is an isomorphism from $G$ onto $\LSec$.

Next, note that $\al(z)=\al(z)+0=z\cdot 0=R_0(z)$. This means that $\al=R_0$ is
an element of $RSec$. Furthermore, $R_x(z)=\al(z)+x=\al(z)+\al^2(x)=
\al(\al(x))+\al(z)=\al(\al(x)+z)=(\al L_x)(z)$. This means that
$R_x=\al L_x$ for all $x\in G$, that is, $RSec$ is the coset of $LSec$ containing
$\al$.

We now turn to part (1). We claim that $\al$ normalizes $LSec$. Indeed,
$(\al L_x\al)(z)=\al L_x(\al(z))=\al(\al(x)+\al(z))=x+z=\al(\al(x))+z=L_{\al(x)}(z)$.
Thus, $\al L_x\al=L_{\al(x)}$, proving that $\al$ normalizes the subgroup
$LSec$. Since $RSec=\al LSec$, we conclude that every element of $RSec$ normalizes
$LSec$, which means that $LSec$ is normal in $M(G_\al)$. Also, it means that
$M(G_\al)=\la LSec,\al\ra$, which implies that $LSec$ has index at most two
in $M(G_{\al})$. (This proves (1).) To be more precise, the index is two if and only
if $\al\not\in LSec$. Clearly, $\al$ fixes $0$ and, as we have already seen, the
only element of $LSec$ fixing $0$ is $L_0$, the identity element of $LSec$. Hence
$LSec$ has index two in $M(G_\al)$ if and only if $\al\ne 1$.

From the above, we also have that $|\Inn(G_\al)|=|\al|$, since $\LSec$ is regular
on $G$ and so $|\Inn(G_\al)|$ is equal to the index of $\LSec$ in $M(G_\al)$.
Since $\al$ fixes $0$, we have $\al\in\Inn(G_\al)$, which implies (2). Parts (3)
and (4) have already been proven. Finally, since $\al\not\in\LSec$ and
$M(G_\al)=\la\LSec,\al\ra$, (5) follows as well.
\end{proof}

As an example of how the multiplication group can be used to identify the
AG-group, we present the following result.

\begin{theorem} \label{AT11}
Suppose $M=M(G_\al)$ for a non-associative AG-group $G_\al$ and $M\cong D_{2n}$. Then
either $G$ is the Klein four-group (and so $n=4$) or $G\cong C_{n}$ is cyclic. In
the latter case $\al=-1$, and hence $G_\al$ is involutory.
\end{theorem}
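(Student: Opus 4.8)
The plan is to leverage Proposition \ref{multiplication group facts}, which tells us that $M(G_\al)$ is the semidirect product $G{:}\la\al\ra$ with $\LSec\cong G$ normal of index two and $\al$ an involution acting on $\LSec$ by $\al L_x\al = L_{\al(x)}$. So assuming $M\cong D_{2n}$, I first need to locate which subgroup of the dihedral group plays the role of $\LSec$. Since $|M|=2n$ and $\LSec$ has index two, $\LSec$ must be one of the index-two subgroups of $D_{2n}$.

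First I would recall the subgroup structure of $D_{2n}=\la r,s\mid r^n=s^2=1,\ srs=r^{-1}\ra$. The key case division is by the parity of $n$. When $n$ is odd, $D_{2n}$ has a \emph{unique} index-two subgroup, the cyclic group $\la r\ra\cong C_n$; hence $\LSec\cong C_n$ is forced, and since $\LSec\cong G$ by part (3), we get $G\cong C_n$. When $n$ is even, $D_{2n}$ has three index-two subgroups: the cyclic $\la r\ra\cong C_n$ and two others isomorphic to $D_n$. Thus I must rule out or analyze the possibility that $\LSec\cong D_n$. But $\LSec\cong G$ is \emph{abelian} (it is isomorphic to the abelian group $G$), so a dihedral $D_n$ can only occur when $D_n$ happens to be abelian, i.e. when $n\le 2$; combined with $n$ even this pins down $n=2$, giving $\LSec\cong D_2\cong C_2\times C_2$, the Klein four-group, and $|M|=4$. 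However, a non-associative $G_\al$ needs $\al\ne 1$, and one must check the order condition carefully here — this is exactly the exceptional Klein four-group branch in the statement (with $n=4$ so that $|G|=4$).

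Once $G\cong C_n$ is established in the main (cyclic) case, the remaining task is to show $\al=-1$. Here I would use that $\al$ is the involution realized by the reflection generator $s$ acting on the normal cyclic subgroup $\la r\ra\cong\LSec\cong G$. In $D_{2n}$ every reflection inverts $\la r\ra$, so conjugation by $\al$ sends each $L_x$ to its inverse $L_x^{-1}=L_{-x}$. Comparing with the action formula $\al L_x\al=L_{\al(x)}$ from Proposition \ref{multiplication group facts}, I conclude $L_{\al(x)}=L_{-x}$ for all $x$, and since $\psi:x\mapsto L_x$ is an isomorphism this forces $\al(x)=-x$, i.e. $\al=-1$. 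By Proposition \ref{involutary}, $G_\al$ is then involutory, completing the proof.

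The main obstacle I anticipate is the bookkeeping in the even-$n$ case, specifically reconciling the index-two abelian constraint with the three candidate subgroups of $D_{2n}$ to see precisely why only the Klein four-group survives as a non-cyclic possibility, and matching the resulting small order ($|M|=4$, $|G|=4$) against the claimed value $n=4$. The rest — extracting $G\cong C_n$ from uniqueness of the index-two subgroup for odd $n$, and deducing $\al=-1$ from the fact that reflections invert the rotation subgroup — should be a direct application of the structural facts already assembled in Proposition \ref{multiplication group facts}, with the reflection-inverts-rotation fact being the one nontrivial dihedral input.
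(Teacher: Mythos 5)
Your overall strategy is exactly the paper's: by Proposition \ref{multiplication group facts}, $\LSec\cong G$ is an abelian normal subgroup of index two in $M\cong D_{2n}$; one then sorts out which index-two subgroups of a dihedral group can be abelian, and in the cyclic case deduces $\al=-1$ by comparing the action formula $\al L_x\al=L_{\al(x)}$ with the fact that any element outside the rotation subgroup inverts it by conjugation. Your odd-$n$ analysis and your derivation of $\al=-1$ are correct and coincide with the paper's proof (which states the inversion step more tersely).

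However, the even-$n$ branch contains a genuine error caused by mixing two dihedral naming conventions. In your own setup the non-cyclic index-two subgroups of $D_{2n}$ (a group of order $2n$) are dihedral of order $n$; such a group is abelian if and only if $n\le 4$ (order $2$ gives $C_2$, order $4$ gives the Klein four-group), not if and only if $n\le 2$. The criterion "$n\le 2$" is the correct statement for dihedral groups with $n$ rotations, i.e. of order $2n$, and applying it here pins down the wrong value $n=2$. The conclusions you draw from it are internally inconsistent: you claim $\LSec\cong D_2\cong C_2\times C_2$ with $|M|=4$, but an index-two subgroup of a group of order $4$ has order $2$, and since $|M|=2|G|$ the combination "$|M|=4$, $|G|=4$" is impossible. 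The correct bookkeeping is: non-associativity forces $\al\ne 1$, hence $n=|G|\ge 3$; for $n$ even, the non-cyclic index-two subgroup is abelian only when $n\le 4$, so $n=4$, $\LSec\cong C_2\times C_2$, and $|M|=8$ --- which is precisely the Klein four branch of the theorem. (The value $n=2$ is excluded in any case, since then $D_{2n}\cong C_2\times C_2$ would be abelian, while $M$ is nonabelian because $\al$ acts nontrivially on $G$.) You flagged this bookkeeping as your anticipated obstacle, but the resolution as written fails; with the corrected abelianness criterion, the rest of your argument goes through and matches the paper's proof.
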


\begin{proof}
First of all, since $G_\al$ is non-associative, $\al$ is a nontrivial automorphism
of $G$ and so $n=|G|\ge 3$. By Theorem \ref{multiplication group facts}, the abelian group
$G$ is isomorphic to an index two subgroup of $M$. From this, it immediately follows
that either $n=4$ and $G$ is the Klein four-group, or $n\ge 3$ is arbitrary and $G$
is cyclic. Finally, in the cyclic case, since $M(G_\al)$ is isomorphic to the
semidirect product of $G$ and $\la\al\ra$, we conclude that $\al$ inverts every element
of $G$ and so $\al=-1$.
\end{proof}

We also give a general characterization of all groups that arise as
multiplication group of an AG-group.

\begin{theorem} \label{general}
A nonabelian group $M$ is isomorphic to a multiplication group of some non-associative
AG-group if and only if $M\cong T\rtimes R$ where $T$ is abelian and $|R|=2$.
\end{theorem}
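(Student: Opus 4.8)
The plan is to read off both directions directly from Proposition \ref{multiplication group facts}, which already identifies the multiplication group of an arbitrary AG-group as a semidirect product, together with the structural correspondence of Theorem \ref{main}.

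For the forward implication, suppose $M\cong M(G_\al)$ for a non-associative AG-group $G_\al$. By Proposition \ref{group case} the non-associativity means $\al\ne 1$, so $\la\al\ra$ has order two. Part (5) of Proposition \ref{multiplication group facts} then gives $M\cong G\rtimes\la\al\ra$ with $G$ abelian, and I would simply set $T=G$ and $R=\la\al\ra$. One only has to observe that this semidirect product is genuinely nonabelian, consistent with the hypothesis: since $\al$ is a nontrivial automorphism of $G$, conjugation by $\al$ does not centralize the subgroup $G$, so $M$ is nonabelian.

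For the converse, suppose $M$ is nonabelian and $M\cong T\rtimes R$ with $T$ abelian and $|R|=2$. Writing $R=\la r\ra$, the semidirect product structure supplies a homomorphism $R\to\Aut(T)$; let $\al$ be the image of $r$. Since $r^2=1$ we get $\al^2=1$, and since $M$ is nonabelian the action must be nontrivial, forcing $\al\ne 1$ (a trivial action would make $T\rtimes R$ the direct product $T\times R$, which is abelian). Now I would form the AG-group $T_\al$ from the abelian group $T$ and the involution $\al$; by Proposition \ref{group case} it is non-associative precisely because $\al\ne 1$. Finally, Proposition \ref{multiplication group facts}(5) yields $M(T_\al)\cong T\rtimes\la\al\ra$, and since both $R$ and $\la\al\ra$ are order-two groups acting on $T$ through the same automorphism $\al$, this is isomorphic to $T\rtimes R\cong M$. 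Hence $M$ is the multiplication group of the non-associative AG-group $T_\al$.

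The only point needing a little care is this last isomorphism of semidirect products: I must check that the abstract splitting $M\cong T\rtimes R$ and the concrete splitting $M(T_\al)\cong T\rtimes\la\al\ra$ use the same action on $T$. This is immediate once $\al$ is defined as the automorphism induced by $r$, since an order-two semidirect product is determined up to isomorphism by the single involutory automorphism through which the complement acts. I do not anticipate any genuine obstacle, as the theorem is essentially a repackaging of the structural description of $M(G_\al)$ already established.
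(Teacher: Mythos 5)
Your proposal is correct and takes essentially the same route as the paper: both directions are read off from Proposition \ref{multiplication group facts}(5), with the converse obtained by letting $\al$ be the automorphism of $T$ induced by the generator of $R$ and forming $T_\al$. You are in fact slightly more explicit than the paper in noting that the nonabelian hypothesis forces $\al\ne 1$, so that $T_\al$ is genuinely non-associative by Proposition \ref{group case} --- a point the paper's proof leaves implicit.
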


\begin{proof}
If $M=M(G_\al)$ then $M=G\rtimes\la\al\ra$ and so all the claimed properties hold.
Conversely, suppose $M=T\rtimes R$ where $T$ is abelian and $|R|=2$. Let $\al\in\Aut(T)$
be the automorphism induced by the generator of $R$ on $T$. Then $M\cong M(T_\al)$
by Proposition \ref{multiplication group facts} (5).
\end{proof}

\section{Sharma's correspondence}

In his paper \cite{SA1} from 1976 Sharma proved the following theorem. We recall
that the identity
$(ab\cdot c)b=a(bc\cdot b)$
is known as the right Bol identity. The loops (respectively, quasigroups)
satisfying this identity are called the right Bol loops (respectively,
right Bol quasigroups).

\begin{theorem} \label{Sharma}
Suppose $G$ is a right Bol loop and $\al\in\Aut(G)$ satisfying $\al^2=1$.
Define a new binary operation on $G$ by $a*b:=\al(a)b$. Then $G_\al=(G,*)$
is a right Bol quasigroup. Furthermore, every right Bol quasigroup is
obtainable in this way. Finally, the right Bol quasigroups $G_\al$ and
$H_\beta$ are isomorphic if and only if the right Bol loops $G$ and $H$ are
isomorphic and automorphisms $\al$ and $\beta$ are conjugate.
\end{theorem}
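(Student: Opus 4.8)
The plan is to follow the same three-part strategy used for Theorem \ref{main}, with the right Bol identity playing the role that the left invertive law played for AG-groups. For the forward direction I would first check that $G_\al=(G,*)$ is a quasigroup: since $\al$ is a bijection, the left translation $L^{*}_{a}=L_{\al(a)}$ and the right translation $R^{*}_{b}=R_{b}\circ\al$ are bijections of $G$. Then I would verify the right Bol identity for $*$ by direct substitution. Writing $x*y=\al(x)y$ and using repeatedly that $\al$ is an automorphism with $\al^2=1$, both $((a*b)*c)*b$ and $a*((b*c)*b)$ reduce, upon setting $A=\al(a)$, $B=b$, $C=\al(c)$, to the two sides of the right Bol identity evaluated at $A,B,C$ in $G$, which coincide since $G$ is a right Bol loop.

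For the converse --- that every right Bol quasigroup $Q=(Q,*)$ arises this way --- the first and only genuinely new step (relative to the AG-group case, where a left identity is built into the definition) is to produce a left identity. I would fix $b\in Q$ and let $f$ be the unique element with $f*b=b$. Substituting $a=f$ into the right Bol identity collapses the left side to $(b*c)*b$, so that $(b*c)*b=f*((b*c)*b)$; since $c\mapsto(b*c)*b$ is a bijection of $Q$, this forces $f*z=z$ for all $z$. Thus $f$ is a (necessarily unique) left identity $e$. I would then set $\al:=R^{*}_{e}$, i.e.\ $\al(x)=x*e$, and define $x\cdot y:=(x*e)*y=\al(x)*y$.

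The heart of the converse is to show $(Q,\cdot)$ is a right Bol loop and $\al$ an involutive automorphism of it. Specializing the right Bol identity at $b=e$ and using $e*c=c$ yields the key identity $((x*e)*c)*e=x*(c*e)$. Taking $c=e$ gives $\al^3=\al$, and since $\al=R^{*}_{e}$ is a bijection this forces $\al^2=1$; taking general $c$ gives exactly $\al(x\cdot y)=\al(x)\cdot\al(y)$, so $\al$ is an automorphism. A short computation shows $e\cdot y=y$ and $x\cdot e=\al^2(x)=x$, while $L^{\cdot}_{x}=L^{*}_{\al(x)}$ and $R^{\cdot}_{y}=R^{*}_{y}\circ\al$ are bijections, so $(Q,\cdot)$ is a loop. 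The right Bol identity for $\cdot$ then follows by running the forward computation backwards: since $\al$ is a bijective automorphism with $\al^2=1$ and $x*y=\al(x)\cdot y$, the already-established right Bol identity for $*$ at $(a,b,c)$ is equivalent to the right Bol identity for $\cdot$ at $(\al(a),b,\al(c))$, and these triples exhaust $Q^3$. Finally $\al(x)\cdot y=\al^2(x)*y=x*y$, so $Q=(Q,\cdot)_\al$.

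For the isomorphism statement I would mirror Proposition \ref{homomorphism} and Corollary \ref{isomorphism}. A quasigroup homomorphism $\pi:G_\al\to H_\bt$ sends the left identity to the left identity by cancellation, whence $\pi(x\cdot y)=\pi(x)\cdot\pi(y)$ and $\pi\al=\bt\pi$ follow from $x\cdot y=(x*e)*y$ and $\al(x)=x*e$; conversely any loop homomorphism with $\pi\al=\bt\pi$ is a $*$-homomorphism, since $\pi(x*y)=\pi(\al(x)\cdot y)=\bt\pi(x)\cdot\pi(y)=\pi(x)*\pi(y)$. Restricting to bijections turns $\pi\al=\bt\pi$ into $\pi\al\pi^{-1}=\bt$, which gives the stated criterion. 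I expect the main obstacle to be the production of the left identity in the converse; once it is in hand, the remaining verifications are the right-Bol analogues of the AG-group computations already carried out in Section 2.
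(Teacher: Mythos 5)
The paper never proves Theorem \ref{Sharma} at all: it is quoted from Sharma's 1976 paper \cite{SHA}, and the paper only proves the parallel results for abelian groups (Theorem \ref{main}) and for groups (Theorem \ref{generalized}). So there is no in-paper proof to compare against; judged on its own merits, your proof is correct, and it is essentially the right-Bol adaptation of the paper's proof of Theorem \ref{generalized}: forward verification by substitution using $\al^2=1$, converse via $\al:=R^{*}_{e}$ and $x\cdot y:=(x*e)*y$, and the isomorphism criterion via the analogue of Proposition \ref{homomorphism}. The one genuinely new ingredient, which you correctly identify, is the production of the left identity, and your argument for it is sound: if $f*b=b$, then putting $a=f$ in $((a*b)*c)*b=a*((b*c)*b)$ gives $(b*c)*b=f*((b*c)*b)$, and since $c\mapsto(b*c)*b$ is a bijection this forces $f*z=z$ for all $z$. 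This is exactly the point where the paper itself leans on Sharma (its remark that every right Bol quasigroup ``automatically has a left identity element'' is stated as a consequence of Sharma's theorem, and the proof of Theorem \ref{generalized} simply assumes a left identity exists), so your write-up is in this respect more self-contained than the paper. The remaining computations --- the key identity $\al(\al(x)*c)=x*\al(c)$ yielding $\al^2=1$ and $\al\in\Aut(Q,\cdot)$, the two-sided identity, and the transfer of the right Bol law between $*$ and $\cdot$ via the bijection $(a,b,c)\mapsto(\al(a),b,\al(c))$ --- all check out.
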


In reality Sharma proved the ``left'' version of this theorem, but we
switched to the above, ``right'' version because it matches better our own
results.

In particular, Sharma's theorem implies that every right Bol quasigroup
automatically has a left identity element.

We note that Sharma's construction is essentially the same as ours,
except it is done for a different, larger class of objects, the Bol loops
instead of abelian groups. In other words, what we proved in Theorem \ref{main}
means simply that the class of AG-groups is the counterpart of the subclass of
abelian groups under Sharma's correspondence. It would be interesting to ask
what are the counterparts of other subclasses of Bol loops, such as, say,
groups or Moufang loops. We leave the Moufang loops case as an open question,
but we have an answer for the class of groups.

Recall from the introduction that by a {\em Bol\st quasigroup} we mean a
quasigroup satisfying
$$a(bc\cdot d)=(ab\cdot c)d$$
for all $a,b,c,d$. Note that this is clearly a subclass of Bol
quasigroups. In particular, every Bol\st quasigroup automatically has
a left identity element.

\begin{theorem} \label{generalized}
Suppose $G$ is a group and $\al\in\Aut(G)$ satisfying $\al^2=1$.
Define a new binary operation on $G$ by $a*b:=\al(a)b$. Then $G_\al=(G,*)$
is a Bol\st quasigroup. Furthermore, every Bol\st quasigroup is
obtainable in this way. Finally, the Bol\st quasigroups $G_\al$ and
$H_\beta$ are isomorphic if and only if the groups $G$ and $H$ are
isomorphic and automorphisms $\al$ and $\beta$ are conjugate.
\end{theorem}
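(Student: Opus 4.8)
The plan is to prove Theorem \ref{generalized} following the template already established by Theorem \ref{main}, replacing abelian groups with arbitrary groups and the AG-group axioms with the Bol\st identity. The theorem has three parts, and I would organize the proof as a sequence of propositions mirroring those in Section 2.

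\medskip

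First I would verify the forward direction: if $G$ is a group, $\al\in\Aut(G)$ with $\al^2=1$, and $a*b:=\al(a)b$, then $(G,*)$ is a Bol\st quasigroup. This is a direct calculation. Since $\al$ is a bijection and left/right multiplication in $G$ are bijections, the equations $a*x=b$ and $x*a=b$ have unique solutions, so $(G,*)$ is a quasigroup. Then I would expand both sides of the Bol\st identity $a(bc\cdot d)=(ab\cdot c)d$ (writing juxtaposition for $*$) using $\al^2=1$ and associativity of the underlying group product. Each side should reduce to an expression in $\al(a)$, $b$, $\al(c)$, $d$; the identity $\al^2=1$ is what collapses the doubly-applied $\al$'s, exactly as in Proposition \ref{AG from abelian and alpha}. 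I expect both sides to equal $\al(a)\,\al(\al(b)c)\,\al(d)$ or some such common form, confirming the law.

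\medskip

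Next I would prove the converse, that every Bol\st quasigroup arises this way. Given a Bol\st quasigroup $Q$, Sharma's Theorem \ref{Sharma} (since Bol\st is a subclass of Bol) already guarantees a left identity $e$ and a presentation $Q=G_\al$ where $G$ is a \emph{right Bol loop} and $\al$ is an involutive automorphism of $G$. The essential new content is therefore to show that this underlying Bol loop $G$ is in fact a group, i.e.\ associative. I would reconstruct the loop operation on $G$ via $x+y:=xe\cdot y$ (as in Proposition \ref{abelian from AG}, now without commutativity) and then feed the Bol\st identity into this operation. The key step is to derive associativity of $(G,+)$ from the stronger Bol\st law; concretely, I would substitute well-chosen values (inserting the identity $e$ in appropriate slots) into $a(bc\cdot d)=(ab\cdot c)d$ so that the $\al$'s cancel and the equation becomes precisely the associative law for $+$. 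This is where I expect the main obstacle to lie: the right Bol law alone does not force associativity, so I must use the full strength of the extra Bol\st axiom, and finding the substitution that isolates associativity cleanly is the crux. Once associativity is established, the reconstruction $xy=\al(x)+y$ follows as in Proposition \ref{reverse}.

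\medskip

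Finally, for the isomorphism classification, I would argue exactly as in Proposition \ref{homomorphism} and Corollary \ref{isomorphism}: a homomorphism $\pi:G_\al\to H_\bt$ of Bol\st quasigroups must send left identity to left identity by cancellativity, hence induce a group homomorphism $\pi:G\to H$ intertwining the twists, $\pi\al=\bt\pi$; conversely any such intertwining group homomorphism is a Bol\st quasigroup homomorphism. Specializing to bijections gives that $G_\al\cong H_\bt$ iff $G\cong H$ via some $\pi$ with $\pi\al\pi^{-1}=\bt$, which is the stated conjugacy condition. The only part of this argument requiring care is confirming that the proofs of Propositions \ref{abelian from AG} and \ref{homomorphism} genuinely go through without commutativity; I would check that every invocation of commutativity in the AG-group case is either avoidable or replaced by associativity, which is now available.
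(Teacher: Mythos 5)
Your proposal is correct and its skeleton matches the paper's: the forward direction is the same direct computation in which $\al^2=1$ collapses the iterated twists, and the isomorphism classification is settled, exactly as in the paper, by noting that the proofs of Proposition \ref{homomorphism} and Corollary \ref{isomorphism} use neither commutativity nor the left invertive law. The one real difference is how you package the converse. You call on Sharma's Theorem \ref{Sharma} to obtain the presentation $Q=G_\al$ with $G$ a right Bol loop, so that the only remaining task is to upgrade the loop to a group; the paper uses Sharma only for the existence of a left identity $e$, and then rebuilds everything by hand, defining $\al(x):=x*e$ and $xy:=(x*e)*y$ and verifying associativity, cancellativity, $\al^2=1$, the automorphism property, and the recovery $x*y=\al(x)y$ directly from the Bol\st law. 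Your route gets the automorphism and recovery statements for free but makes the theorem lean on the full strength of Sharma's correspondence; the paper's route is longer but essentially self-contained. As for the step you singled out as the crux, it works exactly as you hoped, and cleanly: in $a(bc\cdot d)=(ab\cdot c)d$ substitute $a=xe$, $b=y$, $c=e$, $d=z$; the left side becomes $xe\cdot(ye\cdot z)=x+(y+z)$ and the right side becomes $((xe\cdot y)e)\cdot z=(x+y)+z$, which is precisely the associative law for your operation $x+y:=xe\cdot y$. This is the same substitution the paper uses, and note that it genuinely requires the four-variable Bol\st identity: it sets $c=e$ while keeping $d$ free, which the plain Bol law (where $d=b$) does not permit, confirming your remark that the extra axiom is indispensable.
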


\begin{proof}
Let us first see that $G_\al$ as above satisfies the identity
$$a*((b*c)*d)=((a*b)*c)*d.$$
Indeed, $a*((b*c)*d)=\al(a)(\al(\al(b)c)d)=\al(a)\al^2(b)\al(c)d=
\al(a)b\al(c)d$, since $\al^2=1$. Similarly, $((a*b)*c)*d=\al(\al(\al(a)b)c)d=
\al^3(a)\al^2(b)\al(c)d=\al(a)b\al(c)d$. So the identity holds, proving that
$G_\al$ is a Bol\st quasigroup.

Conversely, assume that $(G,*)$ is a Bol\st quasigroup with left identity
$e$. For $x\in G$, define $\al(x)=x*e$ and also, for $x,y\in G$, define
$xy=\al(x)y$. We need to see that (1) $G$ with this new product is a group;
(2) $\al$ is an automorphism of this group of order two; and (3)
$(G,*)=G_\al$.

First of all, for $x,y,z\in G$, $x(yz)=\al(x)*(\al(y)*z)=(x*e)*((y*e)*z)$.
By the identity, the latter is equal to $(((x*e)*y)*e)*z$ and this is equal
to $(x*((e*y)*e)*z$. On the other hand, $(xy)z=\al(\al(x)*y)*z=
(((x*e)*y)*e)*z$, so we have $x(yz)=(xy)z$ for all $x,y,z\in G$, proving that
the new operation is associative. Cancelativity is clear, so we have an
associative quasigroup, hence a group. Note that $e$ is the identity
element of the group, since $ex=\al(e)*x=(e*e)*x=e*x=x$.

For (2), we first need to show that $\al$ is a permutation of order two:
$\al^2(x)*z=((x*e)*e)*z=x*((e*e)*z)=x*z$, and so by cancellativity,
$\al^2(x)=x$. Thus, $\al^2=1$. To show that $\al$ is an automorphism,
we compute:
$\al(xy)=(xy)*e=(\al(x)*y)*e=((x*e)*y)*e=x*((e*y)*e)=x*(y*e)$
and $\al(x)\al(y)=\al(\al(x))*\al(y)=x*(y*e)$. Thus,
$\al(xy)=\al(x)\al(y)$. Finally, (3) is clear since $x*y=\al^2(x)*y=
\al(x)y$. Hence $x*y=\al(x)y$, which means that $(G,*)=G_\al$.

For the final claim in the theorem, we note that the proofs of Proposition
\ref{homomorphism} and Corollary \ref{automorphisms} depend neither on
commutativity of the group operation, nor on the left invertive identity,
so they fully apply in our present case.
\end{proof}

The package AGGROUPOIDS mentioned above also contains functions
enumerating Bol\st quasigroups and Bol quasigroups based on Theorem
\ref{generalized} and Sharma's Theorem \ref{Sharma}. In Tables 2 and 3
we provide the counting for the Bol\st quasigroups and Bol
quasigroups up to order 20 and 30, respectively.
\begin{center}
{\tiny
\begin{table}[h] \label{table2}
\begin{tabular}{|l|l|l|l|l|l|l|l|l|l|l|l|l|l|l|l|l|l|l|l|l|}
\hline
Order & $3$ & $4$ & $5$ & $6$ & $7$ & $8$ & $9$ & $10$ & $11$ & $12$ & $13$
& $14$ & $15$ & $16$ & $17$ & $18$ & $19$ & $20$ \\ \hline
Group & $1$ & $2$ & $1$ & $2$ & $1$ & $5$ & $2$ & $2$ & $1$ & $5$ & $1$
& $2$ & $1$ & $14$ & $1$ & $5$ & $1$ & $5$ \\ \hline
Non-group & $1$ & $2$ & $1$ & $2$ & $1$ & $12$ & $3$ & $2$ & $1$ & $14$ & $1$
& $2$ & $3$ & $88$ & $1$ & $9$ & $1$ & $13$ \\ \hline
Total & $2$ & $4$ & $2$ & $4$ & $2$ & $17$ & $5$ & $4$ & $2$ & $19$ & $2$
& $4$ & $4$ & $102$ & $2$ & $14$ & $2$ & $18$ \\ \hline
\end{tabular}
\bigskip
\caption{Number of Bol\st quasigroups of order $n$, $3\le n\le 20$}
\end{table}
}
\end{center}
\begin{center}
{\tiny
\begin{table}[h] \label{table3}
\begin{tabular}{|l|l|l|l|l|l|l|l|l|l|l|l|l|l|l|}
\hline
Order & $3$ & $4$ & $5$ & $6$ & $7$ & $8$ & $9$ & $10$ & $11$ & $12$ & $13$
& $14$ & $15$ & $16$ \\ \hline
Bol loop & $1$ & $2$ & $1$ & $1$ & $1$ & $3$ & $1$ & $1$ & $1$ & $2$ & $1$
& $1$ & $1$ & $5$ \\ \hline
Other & $1$ & $2$ & $1$ & $1$ & $1$ & $7$ & $3$ & $1$ & $1$ & $6$ & $1$
& $1$ & $3$ & $24$ \\ \hline
Total & $2$ & $4$ & $2$ & $4$ & $2$ & $41$ & $5$ & $4$ & $2$ & $23$ & $2$
& $4$ & $10$ & $16581$ \\ \hline
\hline
Order & $17$ & $18$ & $19$ & $20$ & $21$ & $22$ & $23$ & $24$ & $25$ & $26$ & $27$
& $28$ & $29$ & $30$ \\ \hline
Bol loop & $1$ & $2$ & $1$ & $1$ & $1$ & $3$ & $1$ & $1$ & $1$ & $2$ & $1$
& $1$ & $1$ & $5$ \\ \hline
Other & $1$ & $2$ & $1$ & $1$ & $1$ & $7$ & $3$ & $1$ & $1$ & $6$ & $1$
& $1$ & $3$ & $24$ \\ \hline
Total & $2$ & $16$ & $2$ & $30$ & $12$ & $4$ & $2$ & $\ge 713$ & $5$ & $4$ & $36$
& $28$ & $2$ & $\ge 22$ \\ \hline

\end{tabular}
\bigskip
\caption{Number of Bol quasigroups of order $n$, $3\le n\le 30$}
\end{table}
}
\end{center}
It might be worth mentioning that we can enumerate Bol\st quasigroups
for much larger orders, as long as the list of groups of that order is
available. For Bol quasigroups, we can only go up to the order 31,
as the list of Bol loops of order 32 is an open problem.

\end{document}